\newcommand{\bpi}{\boldsymbol\pi}
\newcommand{\pgf}{\boldsymbol p}
\newcommand{\llgf}{\bar{\boldsymbol p}}
\newcommand{\mgf}{\tilde{\boldsymbol p}}
\newcommand{\od}[2]{\frac{{\rm d} #1}{{\rm d} #2}}
\newcommand{\pd}[2]{\frac{\partial #1}{\partial #2}}
\newcommand{\m}{\boldsymbol}
\newcommand{\M}{{{\mathcal{M}}}}
\renewcommand{\vec}[1]{\boldsymbol{#1}}
\newcommand{\ee}{\mathbb E}
\newtheorem{thm}{Theorem}
\newtheorem{lem}{Lemma}
\newtheorem{cor}{Corollary}
\newtheorem{prop}{Proposition}
\newtheorem{rem}{Remark}
\newcommand{\hI} {^{{\scriptsize(\mbox{\sc i} )}}}
\newcommand{\hII}{^{{\scriptsize(\mbox{\sc ii} )}}}
\newcommand{\afg}{\frac{{\rm d}\varrho_k\hII(t)}{{\rm d}t}}
\newcommand{\vt}{\vartheta}
\newcommand{\vr}{\varrho}
\newcommand{\vb}{\vspace{3.2mm}}
\title%
[Markov-modulated infinite-server queues]
{Analysis of {Markov-modulated infinite-server queues \\
in the central-limit regime}}
\author{Joke Blom$\,^\star$, Koen De Turck$\,^\dagger$, Michel Mandjes$\,^{\bullet,\star}$}
\date{\today}
\begin{document}
\maketitle

\begin{abstract}
This paper focuses on an infinite-server queue modulated by an independently evolving 
finite-state Markovian
background process, with transition rate matrix $Q\equiv(q_{ij})_{i,j=1}^d$. 
{Both arrival rates and service rates are depending
on the state of the background process.}
The main contribution concerns the derivation of central limit theorems for the number of customers
in the system at time $t\ge 0$, in the asymptotic regime
in which 
 the arrival rates $\lambda_i$ are scaled by a factor $N$, and the transition  rates $q_{ij}$
by a factor $N^\alpha$, with $\alpha \in \mathbb R^+$. 
The specific value of $\alpha$ has a crucial impact on the result:
(i)~for $\alpha>1$ the system essentially behaves as an M/M/$\infty$ queue, 
and in the central limit theorem the centered process has to be normalized by
$\sqrt{N}$; (ii)~for $\alpha<1$,
the centered process has to be normalized by
$N^{{1-}\alpha/2}$, with the deviation matrix 
appearing in the expression for the variance.

\vspace{3mm}

\noindent {\sc Keywords.} Infinite-server queues $\star$
Markov modulation $\star$ central limit theorem $\star$ deviation matrices
\vspace{2mm}

\vspace{2mm}

\noindent {Work done while K.\ de Turck was visiting Korteweg-de Vries Institute for Mathematics,
University of Amsterdam, the Netherlands, with greatly appreciated financial support from {\it Fonds Wetenschappelijk Onderzoek / Research Foundation -- Flanders}.
He is also a Postdoctoral Fellow of the same foundation.

\begin{itemize}
\item[$^\bullet$] Korteweg-de Vries Institute for Mathematics,
University of Amsterdam, Science Park 904, 1098 XH Amsterdam, the Netherlands.
\item[$^\star$] CWI, P.O. Box 94079, 1090 GB Amsterdam, the Netherlands.
\item[$^\dagger$] TELIN, Ghent University, St.-Pietersnieuwstraat 41,
B9000 Gent, Belgium.
\end{itemize}
\noindent
M.\ Mandjes is also with  E{\sc urandom}, Eindhoven University of Technology, Eindhoven, the Netherlands,
and IBIS, Faculty of Economics and Business, University of Amsterdam,
Amsterdam, the Netherlands.

\noindent
{\tt joke.blom@cwi.nl, kdeturck@telin.ugent.be, M.R.H.Mandjes@uva.nl}}

\end{abstract}

\newcommand{\hN}{^{(N)}}
\newcommand{\MEAN}{{\mathbb E}}
\newcommand{\PROB}{{\mathbb P}}
\newcommand{\VAR}{{\mathbb V}{\rm ar}}
\newcommand{\COV}{{\mathbb C}{\rm ov}}

\newpage

\section{Introduction} 

The infinite-server queue has been intensively
studied, perhaps owing to its wide applicability and attractive
computational features.
In these
systems jobs arrive according to a given arrival process, go into
service immediately, are served in parallel, and leave when their service is
completed. An important feature of this model is that there is {\it no
waiting}:  jobs do not interfere with each other.
The infinite-server queue was originally developed to analyze 
the probabilistic properties of
the number of calls in progress in a trunk in a communication network,
as an approximation of the corresponding system with many servers.
More recently, however, 
various other application domains have been identified, such as road traffic \cite{WOEN} and
biology \cite{BRUGGEMAN2}.

In the most standard variant of the infinite-server model, 
known as the M/M/$\infty$ model, jobs arrive according to a Poisson
process with a fixed rate $\lambda$, where the service times are i.i.d.\
samples from an exponential distribution with mean $\mu^{-1}$ (independent of the job
arrival process). 
A classical result states that 
the stationary number of jobs in the
system has a Poisson distribution with mean $\lambda/\mu$.
Also the transient behavior of this queueing system is well understood.

\vb
In many
practical situations, however, the assumptions underlying the standard
infinite-server model are not valid.
The arrivals often tend to be `clustered' (so that the assumption of a fixed arrival rate
does not apply), while also the service distribution may vary over time.
This explains the interest in {\it
Markov-modulated} infinite-server queues, so as to 
incorporate `burstiness' into the queue's input process.
In such queues, the input process is modulated by a finite-state (of dimension
$d\in{\mathbb N}$) irreducible continuous-time Markov process $(J(t))_{t\in{\mathbb R}}$,
often referred to as the {\it background process} or {\it modulating process}, with 
transition rate matrix $Q\equiv(q_{ij})_{i,j=1}^d$. If $J(t)$ is
in state $i$, the arrival process is (locally) a Poisson process with rate
$\lambda_i$ {and} the service times are exponential with mean $\mu_i^{-1}$
(while the obvious independence assumptions are assumed to be fulfilled).

The Markov-modulated infinite-server queue has attracted some
attention over the past decades
(but the number of papers on this type of system is relatively modest, compared to the
vast literature on Markov-modulated single-server queues). 
The main focus in the literature so far has been on
characterizing the steady-state number of jobs in the system; see e.g.\
\cite{DAURIA,FALIN2008,FRALIXADAN2009,KEILSONSERVI1993,OCINNEIDEPURDUE}
and references therein. Interestingly, there are hardly any explicit results on the 
probability distribution of the (transient or stationary) number of jobs present: 
the results are in terms of recursive schemes to determine all moments, and
implicit characterizations of the probability
generating function.

An idea to obtain more explicit results for the distribution 
of the number of jobs in the system, 
is by applying specific {\it time-scalings}.
In \cite{BKMT,HELL} a time-scaling is studied in which the
transitions of the background process occur at a faster rate than the Poisson
arrivals. As a consequence, the limiting input process becomes
essentially Poisson (with an arrival rate being an average of the $\lambda_i$\,s);
a similar property applies for the service times. Under this scaling,
one gets in the limit  the Poisson distribution for the 
stationary {number} of
jobs present. Recently, related transient results have been obtained as well,
under specific scalings of the arrival rates and transition times of the
background process \cite{BKMT,BMT}.

\vb

{\it Contribution.} Our work considers a time-scaling featuring in
\cite{BKMT,BMT} as well. In this scaling, the arrival rates $\lambda_i$ are inflated by a factor $N$, while the 
background process $(J(t))_{t\in{\mathbb R}}$ is sped up by a factor $N^\alpha$, for some $\alpha \in (0,\infty)$.
The primary focus is on the regime in which $N$ grows large.

The object of study is the number of jobs in the scaled system at time $t$, in the sequel denoted by $M^{(N)}(t)$.
More specifically, we aim at deriving a central limit theorem ({\sc clt}) for $M^{(N)}(t)$, as well as for its stationary counterpart $M^{(N)}$.
Interestingly, we find different
scaling regimes, based on the value of $\alpha$. The rationale behind these 
different regimes lies in the fact that for $\alpha>1$ the variances of $M^{(N)}(t)$
and $M^{(N)}$ 
grow essentially linearly in $N$, while for $\alpha<1$ they grow as $N^{2-\alpha}.$

It is important to notice that there are actually two variants of this Markov-modulated infinite-server queue. 
In the first (to be referred to as `{Model {\sc i}}') the service times of jobs present at time $t$ are subject to a hazard rate that is
 determined by the state $J(t)$ of the background process at time $t$. In the second variant (referred to as `{Model {\sc ii}}') the service times 
 are determined by the state of the modulating process at the job's arrival epoch
 (and hence can be sampled upon arrival).

The main contribution of our work
is that we develop a unified approach
to prove the {\sc clt}\,s for both Model {\sc i} and Model {\sc ii} for the scalings given above, for arbitrary $\alpha\in(0,\infty)$, and for both the transient and stationary regimes.
The technique used can be summarized as follows.
We first derive differential equations for the probability generating functions (pgf\,s) of the transient number of jobs in the system $M^{(N)}(t)$ as well as
its stationary counterpart $M^{(N)}$ (for both models). 
The next step is to establish laws of large numbers: we identify $\varrho(t)$ ($\varrho$, respectively) to 
which $N^{-1} M^{(N)}(t)$ ($N^{-1}\,M^{(N)}$, respectively) converges as $N\to\infty$. This result indicates
how $M^{(N)}(t)$ and $M^{(N)}$
 should be  centered in a {\sc clt}. The thus obtained centered random variables are then normalized
 (that is, divided by $N^\gamma$, for an appropriately chosen $\gamma$), so as to obtain the {\sc clt}.
As suggested by the asymptotic behavior of the variance of $M^{(N)}(t)$ and $M^{(N)}$, as we pointed out above,
the appropriate choice of the parameter $\gamma$ in the normalization 
is $\gamma=\frac{1}{2}$ for $\alpha>1$, and $\gamma={1-\frac{\alpha}{2}}$ for $\alpha<1$.
The proofs rely on (non-trivial) manipulations of the differential equations that underly 
the pgf\,s. For $\alpha<1$ the {\it deviation matrix}  \cite{CS}
appears  in the {\sc clt} in the expression for the variance.

\vb

{\it Relation to previous work.}
In our preliminary conference paper \cite{BdTM} we just covered Model~{\sc i}, with an approach similar to the one
featuring in the present paper.
In \cite{BKMT} the transient regime of Model {\sc ii} is analyzed, but just for $\alpha>1$, relying on a  different and more elaborate methodology.
New results of this paper are: (i)~Model {\sc ii} for $\alpha \le 1$, (ii)~the {\sc clt} for the stationary number of jobs $M^{(N)}$ in Model {\sc ii}, (iii)
results on the correlation across time. The main contribution, however, concerns the unified approach: where earlier work has been using ad hoc solutions for the scenario at hand, we now have a general
`recipe' to derive {\sc clt}\,s of this kind. Current work in progress aims at functional versions of the {\sc clt}\,s for the {\it process} $(M^{(N)}(t))_{t\in{\mathbb R}}$;
\cite{DAVE} covers the special case of uniform
service rates, which constitutes the intersection of Model {\sc i} and Model {\sc ii}.

\vb

{\it Organization.} The organization of this paper is as follows.
In Section~\ref{sec:model}, we explain the model in detail and introduce the
notations used throughout the paper.
Section \ref{sec:Proof} provides a systematic explanation of our technique for proving this kind of {\sc clt}\,s;
in addition, we demonstrate the approach for a special case, viz.\ the transient analysis for the 
model with uniform  service rates (in which Model {\sc i} and Model {\sc ii} coincide).
In Section \ref{sec:hazard-diff-eq} we recall the results for Model {\sc i} as derived in the precursor paper \cite{BdTM}.
Then in Section \ref{sec:Mod2}, we state and prove for Model {\sc ii} the {\sc clt}\,s, both for the stationary and transient distribution.
The single-dimensional convergence
can be extended to convergence of the finite-dimensional distributions (viz.\ at different points in time); see
Section \ref{CAT}.
In Section \ref{sec:illustration}, we provide some numerical examples so as to get insight into the speed
of convergence to the various limiting regimes.
The final section of the paper,  Section~\ref{sec:discussion}, contains
a discussion and concluding remarks.

\section{Model description and preliminaries} 
\label{sec:model}
In this section, we first provide a detailed model description.
We then give a number of explicit calculations
for the mean and variance of $M^{(N)}(t)$, that indicate how this random variable should be centered and normalized so as to obtain a {\sc clt}. 
We conclude by presenting a number of preliminary results
(e.g., a number of standard results on deviation matrices).

\subsection{Model description, scaling}
The main objective of this paper is to study an infinite-server queue with
Markov-modulated Poisson arrivals and exponential  service times.
In full detail, the model is described as follows.

\vb

{\it Model.} Consider an irreducible continuous-time Markov process $(J(t))_{t\in {\mathbb
R}}$ on a finite state space $\{1,\ldots,d\}$, with $d\in{\mathbb N}$. Let its transition rate
matrix be given by $Q\equiv \left(q_{ij}\right)_{i,j=1}^d$; here the rates 
$q_{ij}$ are nonnegative if $i\not=j$, whereas $q_{ii}=-\sum_{j\not=i}q_{ij}$ (so that the row sums are $0$).
Let $\pi_i$ be the
stationary probability that the background process is in state $i$, for
$i=1,\ldots,d$; due to the irreducibility assumption there is a unique stationary distribution.  
The time spent in state $i$  (often referred to as the {\it
transition time}) has an exponential distribution with mean $1/q_i$, where
$q_i:=-q_{ii}$.

Let $M(t)$ denote the number of jobs in the system at time $t$, and $M$ its steady-state counterpart. The dynamics of
the process $(M(t))_{t\in{\mathbb R}}$ can be described as follows.
While the process $(J(t))_{t\in{\mathbb R}}$, usually referred to as the {\it
background process} or {\it modulating process}, is in state $i\in\{1,\ldots,d\}$, jobs arrive at the queue
according to a Poisson process with rate $\lambda_i\geq0$. The service times
are assumed to be exponentially distributed with rate $\mu_i$, however, more importantly
this statement can be interpreted in two ways: \vspace{2mm}
\begin{itemize}
\item[Model {\sc i}:]
In the first variant of our model, the service times of all jobs present at a certain time instant $t$ are subject to
a hazard rate determined by the state $J(t)$ of background chain at time $t$, regardless of
when they arrived. Informally, if the system is in state $i$, then the  probability of an arbitrary job leaving the system in the next $\Delta t$ time units is
$\mu_i\,\Delta t$.
\item[Model {\sc ii}:]
In the second variant the service rate is determined by the background state as seen by the job upon its arrival.
If the background process was in state $i$, the service time is sampled from an exponential distribution
with mean $\mu_i^{-1}.$
\end{itemize}
The difference between the two models is nicely illustrated by the following alternative representation \cite{DAURIA}.
In Model {\sc i} $M(t)$ has a Poisson distribution with random parameter $\psi(J)$, while in Model {\sc ii} it
is Poisson with random parameter $\varphi(J)$, where $J\equiv (J(s))_{s\in[0,t]}$, and
\begin{equation}\label{Pr}\psi(f) :=\int_0^t \lambda_{f(s) }e^{-\int_s^t \mu_{f(r )}{\rm d}r} {\rm d}s,\:\:\:\:\:\:\:
\varphi(f) :=\int_0^t \lambda_{f(s)} e^{-\mu_{f(s )}\,(t-s)} {\rm d}s,\end{equation}
with $f:[0,t]\mapsto \{1,\ldots,d\}.$

\vb

{\it Scaling.}
In this paper, we consider a scaling in which both (i) the arrival process, and (ii) the
background process are sped up, at a possibly distinct rate. More specifically, the arrival rates
are scaled linearly, that is, as $\lambda_i\mapsto N\lambda_i$, whereas the background chain is
scaled as $q_{ij}\mapsto N^\alpha q_{ij}$, for some {positive} $\alpha$.
We call the resulting 
process $(M\hN({t}))_{{t}\in {\mathbb R}}$, to stress the dependence on the scaling parameter $N$;
the corresponding background process is denoted by $(J\hN({t}))_{{t}\in {\mathbb R}}$.

The main objective of this paper is the derivation of {\sc clt}\,s for the number of jobs in the system, as $N$ grows large.
As mentioned in the introduction, the parameter $\alpha$ plays
an important role here: it turns out to matter
whether $\alpha$ is assumed smaller than, equal to, or larger than 1.
Letting the system start off empty at time $0$, we consider the number of jobs
present at time $t$, denoted by $M^{(N)}(t)$; we write $M^{(N)}$ for its stationary counterpart.

Our main result is a `non-standard {\sc clt}':
for a deterministic function $\vr(t)$,
\begin{equation}\label{eq:GenCLT}
\frac{M^{(N)}(t)-N{\varrho}(t)}{N^{{\gamma}}}
\end{equation}
converges in distribution to a zero-mean Normal distribution with a certain variance, say, $\sigma^2(t)$.  
It is important to note that in the case $\alpha>1$ we have that the parameter ${\gamma}$ equals the usual $\frac{1}{2}$, while for $\alpha\le 1$ it has the uncommon value $1-\frac{\alpha}{2}$. A similar dichotomy holds for the stationary counterpart $M^{(N)}.$ In the next subsection, we present explicit calculations for the mean and variance of $M^{(N)}(t)$ and $M^{(N)}$ that explain the reason behind this dichotomy. 

\subsection{Explicit calculations for the mean and variance}\label{expli}
We now present a number of explicit calculations for the mean and variance of the number of jobs present; for ease we consider the case that $\mu_i=\mu$ for all $i\in\{1,\ldots,d\}$, so that
Models {\sc i} and {\sc ii} coincide. We assume $J(0)$ is distributed according to 
the stationary distribution of the Markov chain $J(t)$. 
Directly from, e.g., \cite{BKMT}, for any $N\in{\mathbb N}$,
\[ \frac{{\mathbb E} M^{(N)}(t)}{N} = \varrho(t):= \frac{1-e^{-\mu t}}{\mu}\sum_{i=1}^d\pi_i\lambda_i,\:\:\:\:\:\:\:
 \frac{{\mathbb E} M^{(N)}}{N} = \varrho:= \frac{1}{\mu}\sum_{i=1}^d\pi_i\lambda_i.\]

We now concentrate on the corresponding variance; 
we first consider the non-scaled system, to later explore the effect of the time-scaling.
In the sequel we use
the notation $p_{ij}(t):={\mathbb P}(J(t)=j\,|\,J(0)=i).$
The `law of total variance', with $J\equiv (J( s))_{s=0}^t$, entails that
\begin{equation}\label{TC}\VAR\, M(t) = \MEAN\,\VAR (M(t)\,|\,J)+\VAR\,\MEAN(M(t)\,|\,J).\end{equation}
We first recall from (\ref{Pr})  that $M(t)$ obeys a Poisson distribution
with the {\it random} parameter $\varphi(J)$. 
As a result, the second term on the right of (\ref{TC}) can be written as 
\[\VAR \varphi(J) = \VAR\left(\int_0^t\lambda_{J(s)}e^{-\mu\,(t-s)}{\rm d}s\right) =
\int_0^t\int_0^t\COV\left( \lambda_{J(u)}e^{-\mu\,(t-u)} \lambda_{J(v)}e^{-\mu\,(t-v)}\right){\rm d}u{\rm d}v,\] 
which can be decomposed into
$I_1+I_2$, where
\begin{eqnarray*}
I_1&:=&\sum_{i=1}^d\sum_{j=1}^d\lambda_i\lambda_jK_{ij},\:\:\mbox{with}\:\:K_{ij}:=\int_0^t\int_0^v e^{-\mu(t-u)} e^{-\mu(t-v) }\pi_i\left(p_{ij}(v-u)-\pi_j\right){\rm d}u{\rm d}v,\\
I_2&:=&\sum_{i=1}^d\sum_{j=1}^d\lambda_i\lambda_jL_{ij},\:\:\mbox{with}\:\:L_{ij}:=\int_0^t\int_v^t e^{-\mu(t-u)} e^{-\mu(t-v) }\pi_j\left(p_{ji}(u-v)-\pi_i\right){\rm d}u{\rm d}v.
\end{eqnarray*}
Let us first evaluate $K_{ij}.$ To this end, substitute $w:=v-u$ (i.e., replace $u$ by $v-w$), and then interchange the order of integration, so as to obtain
\[K_{ij}= e^{-\mu t }\pi_i
\int_0^t\left(\int_w^t e^{2\mu v}{\rm d}v\right) e^{-\mu(t+w)} \left(p_{ij}(w)-\pi_j\right){\rm d}w.\]
Performing the inner integral (i.e., the one over $v$) leads to
\[K_{ij} =\frac{1}{2\mu} e^{-\mu t} \pi_i \int_0^t \left(e^{\mu(t-w)}-e^{-\mu(t-w)}\right)\left(p_{ij}(w)-\pi_j\right){\rm d}w.\]
The integral $L_{ij}$ can be evaluated similarly:
\begin{eqnarray*}
L_{ij}&=&e^{-\mu t}\pi_j \int_{0}^t\left(\int_0^{t-w}e^{2\mu v}{\rm d}v\right)
e^{-\mu(t-w)} \left(p_{ji}(w)-\pi_i\right){\rm d}w\\
&=&\frac{1}{2\mu}e^{-\mu t}\pi_j \int_{0}^t
\left(e^{\mu( t-w)}- e^{-\mu(t-w)}\right) \left(p_{ji}(w)-\pi_i\right){\rm d}w=K_{ji}.\end{eqnarray*}
The first term in the right hand side of (\ref{TC})
is easily evaluated, again relying on the fact that $M(t)$ has a Poisson distribution, conditional on $J$:
\[\MEAN\, \VAR (M(t)\,|\,J) =\sum_{i=1}^d \pi_i\lambda_i \int_0^t e^{-\mu s}{\rm d}s= \frac{1-e^{-\mu t}}{\mu}\sum_{i=1}^d\pi_i\lambda_i= \varrho(t).\]
Now we study the effect of the time-scaling: we replace  $\lambda_i$ by $N\lambda_i$ (for $i=1,\ldots,d$)
and $p_{ij}(w)$ by $p_{ij}(N^\alpha w)$ (for $i,j=1,\ldots,d$). Introduce the {\it deviation matrix} $D$, by
\[[D]_{ij} := \int_0^\infty \left(p_{ij}(t)-\pi_j\right){\rm d} t;\]
see e.g.\ \cite{CS}. 
Combining the above results, it is a matter of some elementary algebra to verify that, in obvious notation,
\[\VAR\,M^{(N)}(t)\sim N\varrho(t)
+N^{2-\alpha}\:\frac{1-e^{-2\mu t}}{\mu}
\sum_{i=1}^d\sum_{j=1}^d \pi_i{\lambda_i\lambda_j} [D]_{ij}.\]
From this relation, the above mentioned dichotomy becomes clear. It is observed that for $\alpha>1$ 
the variance of $M^{(N)}(t)$  grows linearly in $N$, and is essentially equal to
the corresponding mean, viz.\ $N{\varrho}(t).$ 
The intuition here is that in this regime the background process jumps faster than the arrival process, so that the arrival stream is nearly Poisson 
with parameter $\sum_{i=1}^d\pi_i\lambda_i$. The resulting system behaves therefore, as $N\to\infty$, essentially as an M/M/$\infty$.
If $\alpha<1$ the background process is slower than the arrival process. The variance of $M^{(N)}(t)$ now grows like $N^{2-\alpha}$,
proportionally to a constant that is a linear combination of the entries of the deviation matrix $D$. 

The above computations were done for the transient number of jobs $M^{(N)}(t)$, but obviously an analogous reasoning applies to its stationary counterpart $M^{(N)}$. 

\subsection{Preliminaries on deviation matrices, additional notation}\label{Dmat}
In this subsection, we recall a number of key properties of deviation matrices; for more detailed treatments
we refer to e.g.\ the standard texts \cite{KEIL,KEM,SYS}, as well as the compact survey  \cite{CS}. We also introduce additional notation,
which is intensively used later on.

We define the diagonal matrices $\Lambda$ and
$\M$, where $[\Lambda]_{ii} = \lambda_i$ and $[\M]_{ii}=\mu_i$. We
denote the invariant distribution corresponding to the transition
matrix $Q$ by the vector ${\m\pi}$; we follow the convention that vectors are column vectors unless stated otherwise, and that they
are written in bold fonts. As ${\m\pi}$ denotes the invariant distribution, we have ${\m\pi}^{\rm T}Q={\m 0^{\rm T}}$ and ${\m\pi}^{\rm T}{\m 1}={1}$, where ${\m 0}$ and ${\m 1}$ denote vectors of zeros and ones, respectively.
In the sequel we frequently use the `time-average arrival rate' 
$\lambda_\infty:=\sum_{i=1}^d\pi_i\lambda_i= {\m\pi}^{\rm T}{\Lambda{\m 1}}$, and
the `time average departure rate' 
$\mu_\infty:=\sum_{i=1}^d\pi_i\mu_i={\m\pi}^{\rm T}{\M{\m 1}}$.

We recall some concepts pertaining to the theory of deviation matrices of Markov processes; see e.g.\ \cite{CS}.
In particular, we let $\Pi := \m1 \bpi^{\rm T}$ denote the {\it ergodic matrix.}
We also define the {\it fundamental matrix} $F := (\Pi - Q)^{-1}$.
It turns out that the {deviation matrix} $D$, introduced above, satisfies $D= F - \Pi$.
We will frequently use the identities $QF=FQ=\Pi - I$, as well as the facts that $\Pi D= D \Pi =0$ (here $0$ is to be read as an all-zeros $d\times d$ matrix)
and $F{\m 1}={\m 1}.$ 

We use {the} following three vector-valued generating functions throughout the paper:
$\pgf$ denotes the unscaled probability generating function (pgf);
$\llgf\equiv\llgf\hN$ denotes the corresponding moment
generating function (mgf) under the law-of-large-numbers scaling; and
$\mgf\equiv\mgf\hN$ denotes the mgf {centered and normalized}
appropriately for the central limit theorem at hand.
For the transient cases, these generating functions
involve an extra argument $t$ to incorporate time. Importantly, all three generating functions
are {\it vectors} of dimension $d$ as we consider distributions jointly
with the state of  the background process; to make the notation easier, we assume that these vectors are {\it row} vectors. 
Lastly, $\phi\equiv\phi\hN$ denotes the scalar mgf under the 
{centering and normalization}
(obtained by summing the elements of $\mgf$).


\section{Outline of {\sc clt} proofs}
\label{sec:Proof}
In this section we point out how we set up our {\sc clt} proofs. In the next two sections this
`recipe' is then applied to analyze Model {\sc i} and Model {\sc ii}, covering both the transient and stationary number of jobs in the system.
We use a fairly classical approach to proving the {\sc clt}\,s for centered and normalized sequences of random variables
of the type 
(\ref{eq:GenCLT}). More specifically, our objective is to show that under the appropriate 
{normalization} (i.e., an appropriate choice of $\gamma$), the moment
generating function of (\ref{eq:GenCLT})
converges to that of the Normal distribution; the same is done for the stationary counterpart of (\ref{eq:GenCLT}).

Our technique consists of the following steps.
\begin{enumerate}[(a)]
  \item Derive a differential equation for the pgf $\pgf$ of the random quantities $M(t)$ and $M$.
  \item Establish the `mean behavior' $\vr(t)$ ($\vr$, respectively) of $M\hN(t)$ ($M\hN$, 
  respectively). This law of large numbers follows by manipulating 
  the  mgf $\llgf\equiv\llgf\hN$, obtaining a scalar limit solution $\exp(\vt\vr(t))$ in the transient case, and $\exp(\vt\vr)$
  in the stationary case.
  \item Reformulate the differential equation for the uncentered and unnormalized pgf $\pgf$ 
  into a recurrence relation for the
  centered and normalized mgf $\mgf\equiv\mgf\hN$.
  \item Manipulate and iterate this equation, approximate by suitable Taylor expansions, 
  to obtain a differential equation for the scalar mgf $\phi$ under the chosen  
  centering and normalization.
  \item  Discard asymptotically vanishing terms, so as to obtain a unique limit solution, viz.,
 $\phi(\vt) = \exp(\vt^2 \sigma^2(t))$ in the transient case
and $\phi(\vt) = \exp(\vt^2 \sigma^2)$   in the stationary case. We explicitly identify  
  $\sigma^2(t)$ and $\sigma^2$.
  \end{enumerate}
This limit solution resulting from the last step
corresponds to a zero-mean Normal distribution.
  Due to L\'evy's continuity theorem, this pointwise convergence of characteristic functions implies
  convergence in distribution to the zero-mean Normal random variable, so that we have derived the {\sc clt}.

 \noindent
{Issues related to the uniqueness of the solution of the differential equation are dealt with in
  Appendix A.}
Below we demonstrate this proof technique for the special case that the service rates in
each of the states are identical, i.e., $\M=\mu I$ for some $\mu>0$,
so that Models {\sc i} and {\sc ii} coincide. Importantly, Prop.\ \ref{prop:M-de-transient} in Section
\ref{subsec:Proof-diff-eq-p} holds for general $\M$.

\subsection{Differential equations for the pgf $\pgf$}
\label{subsec:Proof-diff-eq-p}

First we derive a system of differential equations for the pgf of the number of jobs in the system,
jointly with the background state.
We consider the bivariate process $(M(t), J(t))_{t\in {\mathbb R}}$, which is an {ergodic}
Markov process on the state space $\{1,\ldots,d\} \times \mathbb N$.
With the states of this process
enumerated in the obvious way, it has the (infinite-dimensional) transition rate matrix
\[
\begin{pmatrix}
    Q - \Lambda & \Lambda \\
\M & Q-\M-\Lambda & \Lambda \\
&2\M & Q-2\M-\Lambda & \Lambda \\
& &3\M & Q-3\M-\Lambda & \Lambda \\
& & & \ddots&\ddots&\ddots
\end{pmatrix}.
\]

We set out to find the transient distribution $(\pgf_k(t))_{k=0}^\infty$,
where $\pgf_k(t)$ is a $d$-dimensional row-vector whose entries are defined by
$[\pgf_k(t)]_j := {\mathbb P}(M(t) = k, J(t) = j).$
The (row-vector-)pgf $\pgf(t,z)$ is then defined through
\[ \pgf(t,z) := \sum_{k=0}^\infty \pgf_k(t) z^k, \]
such that 
\[[  \pgf(t,z)]_j ={\mathbb E} \left( z^{M(t)}1_{\{J(t)=j\}}\right).\]

\begin{prop}
\label{prop:M-de-transient}
The pgf $\pgf(t,z)$ satisfies the following differential equation:
\[ \pd{\pgf(t,z)}t = \pgf(t,z)\, Q + (z-1)\left(\pgf(t,z)\, \Lambda - \pd{\pgf(t,z)}z \M \right). \]
\end{prop}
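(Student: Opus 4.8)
The plan is to derive the differential equation directly from the Kolmogorov forward equations associated with the infinite-dimensional transition rate matrix displayed just above the proposition. First I would write out, for each fixed level $k\ge 1$ and each background state $j$, the balance equation for $[\pgf_k(t)]_j$: the rate of change equals the inflow minus outflow. Reading off the band structure of the generator, the vector-valued form is
\[
\pd{\pgf_k(t)}{t} = \pgf_k(t)\,Q - k\,\pgf_k(t)\,\M - \pgf_k(t)\,\Lambda + \pgf_{k-1}(t)\,\Lambda + (k+1)\,\pgf_{k+1}(t)\,\M,
\]
for $k\ge 1$, with the boundary case
\[
\pd{\pgf_0(t)}{t} = \pgf_0(t)\,Q - \pgf_0(t)\,\Lambda + \pgf_1(t)\,\M.
\]
Here the $\pgf_k(t)\,Q$ term comes from the diagonal blocks' $Q$ part (background transitions that do not change the job count), the $-k\pgf_k\M - \pgf_k\Lambda$ terms are the total departure rate out of level $k$, and the $\pgf_{k-1}\Lambda$ and $(k+1)\pgf_{k+1}\M$ terms are the arrival and departure inflows from the neighbouring levels.

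Next I would multiply the $k$-th equation by $z^k$ and sum over $k\ge 0$, interchanging summation and differentiation (justified because $(M(t),J(t))$ is a well-behaved ergodic Markov chain, so the series and all the manipulations below converge for $|z|\le 1$). The $\pgf_k(t)\,Q$ terms sum to $\pgf(t,z)\,Q$. The $-\pgf_k(t)\,\Lambda$ and $\pgf_{k-1}(t)\,\Lambda$ terms combine, after re-indexing the second sum, to $(z-1)\,\pgf(t,z)\,\Lambda$. The two terms involving $\M$ require recognizing generating-function identities: $\sum_{k\ge 0} k\,\pgf_k(t)\,z^k\,\M = z\,\pd{\pgf(t,z)}{z}\,\M$, and $\sum_{k\ge 0}(k+1)\,\pgf_{k+1}(t)\,z^k\,\M = \pd{\pgf(t,z)}{z}\,\M$; together these give $(1-z)\,\pd{\pgf(t,z)}{z}\,\M = -(z-1)\,\pd{\pgf(t,z)}{z}\,\M$. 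Collecting everything yields exactly
\[
\pd{\pgf(t,z)}{t} = \pgf(t,z)\,Q + (z-1)\left(\pgf(t,z)\,\Lambda - \pd{\pgf(t,z)}{z}\,\M\right),
\]
as claimed.

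The main obstacle, such as it is, is bookkeeping rather than anything deep: one has to be careful with the $k=0$ boundary term so that it folds seamlessly into the general formula after multiplying by $z^k$ and summing (it does, because the "missing" $\pgf_{-1}$ term would be multiplied by $z^{-1}\cdot z^{0}$ with coefficient zero, and the level-$0$ departure structure $-k\M$ vanishes at $k=0$). A second point worth a sentence is the legitimacy of differentiating the pgf series term-by-term in $z$ and in $t$: since the state space is $\{1,\dots,d\}\times\nn$ and the chain is non-explosive (arrivals are at bounded rate $N\lambda_i$, and from level $k$ the downward rate is $k\mu_i$, so no explosion), the probabilities $[\pgf_k(t)]_j$ decay fast enough in $k$ that $\pgf(t,z)$ is analytic in $z$ on a neighbourhood of the closed unit disc and the forward equations hold in the strong sense; I would simply cite this standard fact. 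Note that the derivation never used $\M = \mu I$, so the proposition holds for general diagonal $\M$, consistent with the remark preceding it — indeed for Model {\sc i} (or equivalently the uniform-rate case) this single equation suffices, whereas Model {\sc ii} will later require tracking job types and hence a different, multi-component version.
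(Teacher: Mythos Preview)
Your proof is correct and follows essentially the same route as the paper: write the Kolmogorov forward equations for $\pgf_k(t)$ from the block-tridiagonal generator, multiply by $z^k$, sum over $k$, and invoke the standard identities $\sum_k k\pgf_k z^k = z\,\partial_z\pgf$ and $\sum_k (k+1)\pgf_{k+1} z^k = \partial_z\pgf$. The only cosmetic difference is that the paper absorbs the boundary case by setting $\pgf_{-1}(t):=0$, whereas you treat $k=0$ separately; your additional remarks on non-explosion and term-by-term differentiation are more careful than the paper, which simply calls the argument ``classical''.
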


\begin{proof} The result follows from classical arguments.
By virtue of the Chapman-Kolgomorov equation,
we have that
\begin{equation}\label{eq:de}
\frac{{\rm d}{\pgf_k(t)}}{{\rm d}t} = \pgf_{k-1}(t) \Lambda + \pgf_k(t) (Q - \Lambda - k \M) + (k+1) \pgf_{k+1}(t) \M,
\end{equation}
for all $k\in \mathbb N$, where we put $\pgf_{-1}(t):=0$ for all $t\geq0$.

From the standard relations
\[
 \sum_{k=0}^\infty (k+1) \pgf_{k+1}(t) z^k = \pd{\pgf(t,z)}z,
\:\:\mbox{and}\:\:
 \sum_{k=0}^\infty k \pgf_k(t) z^k = z \pd{\pgf(t,z)}z,\]
we obtain by multiplying both sides of (\ref{eq:de}) by $z^k$ and summing over $k\in\mathbb N$,
\[\pd{\pgf(t,z)}t =
 z \pgf(t,z) \Lambda + \pgf(t,z)(Q-\Lambda) - z \pd{\pgf(t,z)}z \M  + \pd{\pgf(t,z)}z \M. \]
The claim follows directly.
\end{proof}

We assume that at time
$0$ the system starts off empty.
Under the scaling $\Lambda \mapsto N \Lambda$ and $Q \mapsto N^{\alpha} Q$,
Prop.\ \ref{prop:M-de-transient} implies that
we have the following system of partial differential equations governing $(M\hN(t), J\hN(t))$:
\begin{equation}\label{MN-de-transient}
 \pd{\pgf\hN(t,z)}t = N^{\alpha} \pgf\hN(t,z)\, Q + (z-1)\left( N \pgf\hN(t,z) \Lambda - \pd{\pgf\hN(t,z)}z \M \right)
\end{equation}
describing the pgf $\pgf\hN$ of the number of jobs in the scaled system.

\subsection{Mean behavior}
\label{subsec: Proof-LLN}
In the remainder of this section, we assume that all service rates are identical:  $\M=\mu I.$
To obtain the limiting behavior of $N^{-1}\,M\hN(t)$ when $N$ grows large, it turns out to be convenient
to take the following steps.
\begin{enumerate}[(i)]
 \item Rewrite the differential equation (\ref{MN-de-transient}) as a recurrence relation for 
 $\pgf\hN$ involving the fundamental matrix $F$; recall from Section \ref{Dmat} the relation $QF=\Pi-I$.
 \item Translate this into a recurrence relation in terms of  the mgf $\llgf\hN$ of $N^{-1}\,M\hN(t)$, using
 a Taylor expansion for $z=\exp(\vt/N)$.
 \item Sum over the possible background states by postmultiplying with $\m1$, so as to obtain a scalar mgf;
 in this step we make use of the identity $F\m1=\m1$.
 \item Obtain the limiting differential equation by taking the limit for  $N \to \infty$. This equation
 has a closed solution. This is the mgf of the limiting constant $\vr(t)$.
\end{enumerate}

In this way we have proven the convergence in 
distribution of $N^{-1}M\hN(t)$ to $\varrho(t)$; as this limit is a constant, convergence in probability
follows immediately.

\vb

Let us go through the procedure in full detail now. 
Postmultiplication of Eqn.\ (\ref{MN-de-transient}) with $F$ and $N^{-\alpha}$, using
$QF=\Pi-I$, results in the recurrence relation
\begin{eqnarray}\label{pN-rec}
  \pgf\hN(t,z)& =& \pgf\hN(t,z)\,\Pi + N^{-\alpha} (z-1) \left(N \pgf\hN(t,z) \Lambda 
  - \pd{\pgf\hN(t,z)} z \M\right)F\\&& -\, N^{-\alpha} \pd{\pgf\hN(t,z)}t F.\nonumber
\label{eq:hr-tr-scaled-pgf}
\end{eqnarray}

We are now set to state and prove the mean behavior of $M\hN(t).$ Define
$\vr(t) := \vr\,(1-e^{-\mu t})$, with $\vr=\lambda_\infty/\mu$.
\begin{lem}\label{LEM1}
$N^{-1} M^{(N)}(t)$ converges in probability
to  $\vr(t)$, as $N\to\infty$.
\end{lem}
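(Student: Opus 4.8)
The plan is to follow the ``recipe'' outlined in Section~\ref{sec:Proof}, specifically the transition from the pgf recurrence \eqref{pN-rec} to a scalar differential equation for a suitable moment generating function. First I would introduce the law-of-large-numbers scaling explicitly: set $z = e^{\vt/N}$ (so that $z^{M\hN(t)} = e^{\vt M\hN(t)/N}$), and define the $d$-dimensional row vector
\[
\llgf\hN(t,\vt) := \pgf\hN(t, e^{\vt/N}),\qquad\text{so that}\qquad [\llgf\hN(t,\vt)]_j = \MEAN\big( e^{\vt M\hN(t)/N} 1_{\{J\hN(t)=j\}}\big).
\]
Under this substitution $z-1 = e^{\vt/N}-1 = \vt/N + O(N^{-2})$, and $\pd{\pgf\hN}{z} = (N/z)\,\pd{\llgf\hN}{\vt} = N e^{-\vt/N}\pd{\llgf\hN}{\vt}$. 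Substituting into \eqref{pN-rec} and keeping careful track of powers of $N$, the term $N\pgf\hN\Lambda$ multiplied by $(z-1)N^{-\alpha} = N^{-\alpha}(\vt/N + O(N^{-2}))$ contributes at order $N^{-\alpha}$; the term involving $\pd{\pgf\hN}{z}\M = \mu N e^{-\vt/N}\pd{\llgf\hN}{\vt}$ times $(z-1)N^{-\alpha}$ also contributes at order $N^{-\alpha}$; and the time-derivative term carries an explicit $N^{-\alpha}$. Hence \eqref{pN-rec} becomes, after dividing through appropriately,
\[
\llgf\hN(t,\vt) = \llgf\hN(t,\vt)\,\Pi + N^{-\alpha}\Big(\vt\,\llgf\hN(t,\vt)\,\Lambda F - \mu\vt\,\pd{\llgf\hN(t,\vt)}{\vt}F - \pd{\llgf\hN(t,\vt)}{t}F\Big) + (\text{error}),
\]
where the error terms are $O(N^{-\alpha-1})$ and $O(N^{-2\alpha})$ uniformly on compact sets of $(t,\vt)$.

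Next I would postmultiply by $\m1$ and use $F\m1 = \m1$ together with $\Pi\m1 = \m1\bpi^{\rm T}\m1 = \m1$. Writing $\bar\phi\hN(t,\vt) := \llgf\hN(t,\vt)\m1 = \MEAN\, e^{\vt M\hN(t)/N}$ for the scalar mgf, the leading term $\llgf\hN\Pi\m1 = \llgf\hN\m1 = \bar\phi\hN$ cancels against the left-hand side, leaving
\[
0 = N^{-\alpha}\Big(\vt\,\llgf\hN\Lambda\m1 - \mu\vt\,\pd{\llgf\hN}{\vt}\m1 - \pd{\llgf\hN}{t}\m1\Big) + (\text{error}),
\]
so that, multiplying by $N^{\alpha}$,
\[
\pd{\bar\phi\hN(t,\vt)}{t} = \vt\,\llgf\hN(t,\vt)\Lambda\m1 - \mu\vt\,\pd{\bar\phi\hN(t,\vt)}{\vt} + O(N^{-\min(1,\alpha)}).
\]
The one subtlety here is the term $\llgf\hN\Lambda\m1 = \sum_j \lambda_j [\llgf\hN]_j$: it is \emph{not} directly a function of $\bar\phi\hN$ alone. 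The resolution is that as $N\to\infty$ the vector $\llgf\hN$ converges to $\bar\phi\m1\bpi^{\rm T}\cdot(\text{something})$ — more precisely, from the relation $\llgf\hN = \llgf\hN\Pi + O(N^{-\alpha})$ we get $\llgf\hN(t,\vt) \to \bar\phi(t,\vt)\,\bpi^{\rm T}$ componentwise, i.e. the mass becomes proportionally distributed over the background states according to $\bpi$. Consequently $\llgf\hN\Lambda\m1 \to \bar\phi\cdot\bpi^{\rm T}\Lambda\m1 = \lambda_\infty\,\bar\phi$. Taking $N\to\infty$ thus yields the limiting PDE
\[
\pd{\bar\phi(t,\vt)}{t} = \lambda_\infty\,\vt\,\bar\phi(t,\vt) - \mu\vt\,\pd{\bar\phi(t,\vt)}{\vt},\qquad \bar\phi(0,\vt) = 1,
\]
whose unique solution (e.g. by the method of characteristics, or by verifying the ansatz $\bar\phi(t,\vt) = \exp(\vt\,g(t))$, which forces $g'(t) = \lambda_\infty - \mu g(t)$, $g(0)=0$, hence $g(t) = (\lambda_\infty/\mu)(1-e^{-\mu t}) = \vr(t)$) is $\bar\phi(t,\vt) = e^{\vt\vr(t)}$. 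This is the mgf of the constant $\vr(t)$, so $N^{-1}M\hN(t) \to \vr(t)$ in distribution, and since the limit is deterministic, in probability.

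The main obstacle is making the convergence $\llgf\hN \to \bar\phi\,\bpi^{\rm T}$ rigorous and uniform enough to justify passing to the limit inside the differential equation, and handling the uniqueness of the solution of the limiting PDE (the excerpt explicitly defers uniqueness questions to Appendix~A, so I would invoke that). Concretely one must: (i) establish a priori bounds showing $\llgf\hN(t,\vt)$ and its $t$- and $\vt$-derivatives are bounded on compact sets — this follows because $M\hN(t)$ is stochastically dominated by the number in an M/M/$\infty$ queue with arrival rate $N\max_i\lambda_i$ and service rate $\mu$, whose mgf under the $1/N$-scaling is explicitly controlled; (ii) feed these bounds back into the recurrence $\llgf\hN = \llgf\hN\Pi + O(N^{-\alpha})$ to get $\|\llgf\hN(t,\vt) - (\llgf\hN(t,\vt)\m1)\bpi^{\rm T}\| = O(N^{-\alpha})$ (using that $(I-\Pi)$ restricted to the orthogonal complement of $\m1$ is invertible, with inverse related to $F$); (iii) conclude that the $O(N^{-\min(1,\alpha)})$ remainder in the scalar equation is genuinely uniformly vanishing, so that any subsequential limit of $\bar\phi\hN$ solves the limiting PDE with the correct initial data, and then invoke uniqueness. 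Everything after that is the routine ODE computation above.
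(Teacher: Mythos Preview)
Your proposal is correct and follows essentially the same approach as the paper: introduce the mgf $\llgf\hN(t,\vt)=\pgf\hN(t,e^{\vt/N})$, substitute into the recurrence \eqref{pN-rec}, use $\llgf\hN=\llgf\hN\Pi+O(N^{-\alpha})$ to replace $\llgf\hN\Lambda\m1$ by $\lambda_\infty\bar\phi\hN$, postmultiply by $\m1$ to obtain the scalar limiting {\sc pde}, and verify that $\exp(\vt\varrho(t))$ is the unique solution. Your treatment is in fact somewhat more careful than the paper's (you make the passage $\llgf\hN\to\bar\phi\,\bpi^{\rm T}$ explicit and sketch the a~priori bounds via stochastic domination), whereas the paper applies the substitution $\llgf\hN\mapsto\llgf\hN\Pi$ inside the bracket before postmultiplying and leaves the analytic justification largely implicit, deferring uniqueness to Appendix~A.
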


\begin{proof}
We introduce the transient scaled moment generating function $\llgf\hN(t,{\vartheta})$:
\[ \llgf\hN(t, {\vartheta}): = \pgf\hN(t, z), \]
with $z\equiv z\hN({\vartheta})=\exp({\vartheta}/ N)$. Evidently,
\[\pd{\llgf\hN(t,\vt)}t = \pd{\pgf\hN(t,z)}t,\qquad
\pd{\llgf\hN({t,\vt})}\vt =
 \pd{\pgf\hN(t,z)}z \od z {\vartheta} = \frac{z}{N} \,\pd{\pgf\hN(t,z)}z.\]
Substituting these expressions in Eqn.~(\ref{pN-rec}) and noting that 
$z^{\pm1}=1\pm\vt N^{-1}+O(N^{-2})$, we obtain
\begin{eqnarray*}
  \llgf\hN(t,\vt)& =& \llgf\hN(t,\vt) \Pi + N^{-\alpha} \left({\vartheta}\,\llgf\hN(t,\vt)\, \Lambda - {\vartheta} \,
  \pd{\llgf\hN(t,\vt)} {\vartheta} \mu I \right.\\
  &&\hspace{5cm}\left.-\, \pd{\llgf\hN(t,\vt)} t\right)F + o(N^{-\alpha}).
\end{eqnarray*}
The above implies that $\llgf\hN(t,\vt) = \llgf\hN(t,\vt)\Pi + O(N^{-\alpha})$, and
the same holds for the partial derivatives of $\llgf\hN(t,\vt)$, so all $\llgf\hN(t,\vt)$ 
between the brackets can be replaced by $\llgf\hN(t,\vt)\,\Pi$.
Postmultiplying by $\m1 N^\alpha$ and using the identities
$\Pi\m1=\m1$ and $F\m1=\m1$,  yields
\[
  0 = \left( \vt \lambda_\infty \llgf\hN(t, {\vartheta}) \,\m1 - \mu{\vartheta} \,\pd{ \llgf\hN(t, {\vartheta}) } {\vartheta}\m 1 -
   \pd{ \llgf\hN(t, {\vartheta}) } t\m 1 \right)  + o(1);
\] 
recall the definitions $\Pi:=\m1\m\pi^{\rm T}$ and
$\lambda_\infty:=\m\pi^{\rm T}\Lambda\m1$.
Define $\llgf(t,\vt)\m 1$ as the limit of $\llgf\hN(t,\vt)\m 1$
as $N\to\infty$.
Now multiply the differential equation with $N^\alpha$ and let $N\to\infty.$
We thus obtain a scalar partial differential equation in $\llgf(t,\vt)\m 1$
\[\pd{(\llgf(t,\vt)\m 1)}t = \vt\lambda_\infty(\llgf(t,\vt)\m 1) - \mu\vt\pd{(\llgf(t,\vt)\m 1)}\vt.\]
It is straightforward to check that 
  $\llgf(t,\vt)\m1 = \exp(\vt\varrho(t))$
satisfies the equation as well as the boundary conditions $\llgf(t,0)\m1=1$ and $\llgf(0,{\vartheta})\m 1=1$.
Now the stated follows directly.
\end{proof}

\subsection{Recurrence relations for the  centered and normalized mgf $\mgf\hN$}
\label{subsec: Proof-rec-rel-pt}

Now that we have derived the weak law of large numbers, we introduce in the
next step the centered and normalized mgf $\mgf\hN(t,\vt)$, that is, centered around
$N\vr(t)$ and normalized by $N^{-\gamma}$, with the scalar $\gamma$ yet to be determined.
We perform a change of variables in the recurrence relation for $\pgf\hN$,
Eqn.~(\ref{pN-rec}), so as to obtain the recurrence relation for the 
centered and normalized mgf $\mgf\hN$.

\vb

The pgf $\pgf\hN$ can be expressed in the normalized and centered mgf $\mgf\hN$
using
\[\mgf\hN(t,\vt) = \exp(-N\vr(t)\vt/N^\gamma)\, \pgf\hN\left(t, \exp(\vt/N^\gamma)\right), \]
which can be written as
\[ \pgf\hN(t,z) = \exp(\vr(t)\vt N^{1-\gamma}) \, \mgf\hN(t,\vt), \]
with $z\equiv z\hN(\vt) = \exp(\vt N^{-\gamma}).$
It is readily verified  that
\begin{eqnarray*} \pd{\pgf\hN(t,z)}z \od z \vt &=& \exp(\vr(t)\vt N^{1-\gamma}) 
\left( \vr(t) N^{1-\gamma} \mgf\hN(t,\vt) + \pd{\mgf\hN(\vt)}\vt \right);\\
 \od z \vt &=&  N^{-\gamma} \exp(\vt N^\gamma) =  N^{-\gamma} z, \end{eqnarray*}
so the derivatives of $\pgf\hN$ can be expressed in terms of the corresponding derivatives of $\mgf\hN$:
\begin{eqnarray*}\pd{\pgf\hN(t,z)}t &=&  \exp(\vr(t) \vt N^{1-\gamma})  \left( \vr'(t) \vt N^{1-\gamma}
\mgf\hN(t,\vt) + \pd{\mgf\hN(t,\vt)} t \right), \\
\pd{\pgf\hN(t,z)} z &=& \frac{1}{z}\, \exp(\vr(t)\vt N^{1-\gamma}) 
\left( N \vr(t)\, \mgf\hN(t,\vt) + N^\gamma \pd{\mgf\hN(t,\vt)}\vt \right). \end{eqnarray*}
Now perform the change of variables and substitute the expressions
for $\pgf\hN(t,z)$ and its partial derivatives  into Eqn.~(\ref{pN-rec}). Dividing by $\exp(\vr(t)\vt N^{1-\gamma})$ yields the following recurrence relation for $\mgf\hN$:
\begin{eqnarray}\label{ptN-rec}
\mgf\hN(t,\vt) &=& \mgf\hN(t,\vt) \Pi + 
N^{1-\alpha}\left(z\hN(\vt)-1\right) \mgf\hN(t,\vt) \Lambda F \nonumber\\
&&-N^{1-\alpha}\left(1-\frac{1}{z\hN(\vt)}\right) \vr(t) \,\mgf\hN(t,\vt)\M F \nonumber\\
&&-N^{\gamma-\alpha}\left(1-{\frac{1}{z\hN(\vt)}}\right) \pd{\mgf\hN(t,\vt)}\vt \M F\nonumber\\
&&-N^{1-\alpha-\gamma}\vr'(t)\vt\mgf\hN(t,\vt) F - N^{-\alpha}\pd{\mgf\hN(t,\vt)}t F.
\end{eqnarray}

\subsection{Differential equation for the scalar, centered and normalized, mgf $\phi\hN$}
\label{subsec: Proof-diff-eq-pt}

The next step is to expand $z$ in a Taylor series. Assuming certain restrictions
on $\gamma$ (that we later justify) we delete all terms of order smaller than $N^{-\alpha}$.
The resulting recurrence relation is iterated and manipulated until all
terms in the right-hand side contain $\mgf\hN\Pi$.
Next we postmultiply this system of partial differential equations by
$\m1$, so as to obtain a scalar partial differential equation in terms of
$\phi\hN(t,\vt) := \mgf\hN(t,\vt) \m1$. In this step we make use of the
definition of $\Pi:=\m1 \m\pi^{\rm T}$ and the identities $\Pi \m1=\m1$ and $F\m1=\m1$.

\vb 

The Taylor expansions of $z$ and $z^{-1}$ are
\[  z^{\pm 1} = 1 \pm \vt N^{-\gamma} + \frac12{\vt}^2 N^{-2\gamma} + O(N^{-3\gamma}),
\]
Applying these to Eqn.~(\ref{ptN-rec}) results in 
\begin{eqnarray}
\mgf\hN(t,\vt) &=& \mgf\hN(t,\vt) \Pi +
\vt N^{1-\alpha-\gamma}\mgf\hN(t,\vt)(\Lambda-\vr(t)\M-\vr'(t)I) F\nonumber\\
&&+\frac{\vt^2}2 N^{1-\alpha-2\gamma} \mgf\hN(t,\vt) (\Lambda + \vr(t)\M) F \nonumber\\
&&-\vt N^{-\alpha} \pd{\mgf\hN(t,\vt)}\vt \M F - N^{-\alpha}\pd{\mgf\hN(t,\vt)}t F+
O(N^{1-\alpha-3\gamma})+O(N^{-\alpha-\gamma}).\label{ptN-rec-taylored}
\end{eqnarray}
Under the assumption that $\gamma > 1/3$ (to be justified later) the order terms can be replaced by
$o(N^{-\alpha})$.

Next we iterate Eqn.\ (\ref{ptN-rec-taylored})
until all terms in the right-hand side either contain ${\mgf\hN(\vt)}\Pi$ or
are of $O(N^{-\alpha})$. For the latter we assume a second restriction, viz., $\gamma \ge 1-\alpha/2$
(also justified later).
We thus obtain
\begin{eqnarray*}
\mgf\hN(t,\vt) &=& \mgf\hN(t,\vt) \Pi \\
&&+\vt N^{1-\alpha-\gamma}\left(\mgf\hN(t,\vt) \Pi+ \vt N^{1-\alpha-\gamma}\mgf\hN(t,\vt)(\Lambda-\vr(t)\M-\vr'(t)I) F + \right.\\
&&\left.\qquad\qquad O(N^{1-\alpha-2\gamma}) + O(N^{-\alpha})\right)(\Lambda-\vr(t)\M-\vr'(t)I) F\\
&&+\frac{\vt^2}2 N^{1-\alpha-2\gamma}\left(\mgf\hN(t,\vt)\Pi + O(N^{1-\alpha-\gamma}) + O(N^{-\alpha}) \right)(\Lambda + \vr(t)\M) F \\
&&-\vt N^{-\alpha} \pd{\mgf\hN(t,\vt)}\vt \M F - N^{-\alpha}\pd{\mgf\hN(t,\vt)}t F+o(N^{-\alpha});
\label{eq:hr-tr-taylored-eq}
\end{eqnarray*}
here we remark that in the $O(N^{-\alpha})$-terms $\mgf\hN$ can be replaced by $\mgf\hN\Pi$ as an immediate consequence of the
fact that  
Eqn.\ (\ref{ptN-rec-taylored}) implies $\mgf\hN=\mgf\hN\Pi+o(1)$, while the same applies to
its derivatives. The above equation can be rewritten as
\begin{eqnarray}
\mgf\hN(t,\vt) &=& \mgf\hN(t,\vt) \Pi + \vt N^{1-\alpha-\gamma}\,\mgf\hN(t,\vt)\,\Pi(\Lambda - \vr(t) \M-\vr'(t)I) F \nonumber\\
&&+\vt^2 N^{2-2\alpha-2\gamma}\, \mgf\hN(t,\vt)\,\Pi(\Lambda - \vr(t) \M-\vr'(t)I) F (\Lambda - \vr(t) \M-\vr'(t)I) F \nonumber\\
&&+\frac{\vartheta^2}2 N^{1-\alpha-2\gamma} \mgf\hN(t,\vt)\,\Pi (\Lambda + \vr(t)\M) F \nonumber\\
\label{iter}&&-\vt N^{-\alpha} \pd{\mgf\hN(t,\vt)}\vt \,\Pi \M F - N^{-\alpha}\pd{\mgf\hN(t,\vt)}t  \,\Pi F+o(N^{-\alpha}).
  \end{eqnarray}
Now postmultiply Eqn.\ (\ref{iter}) by $ {\m 1}\, N^\alpha$; using the identities $\Pi{\m 1}={\m 1}$ and $F{\m 1}={\m 1}$, and the definition $\Pi:=\m1 \m\pi^{\rm T}$. We obtain
\begin{eqnarray*}
0 &=& \vt N^{1-\gamma} \phi\hN(t,\vt)\,{\m\pi}^{\rm T} \left(\Lambda - \vr(t) \M - \vr'(t)I\right) \m1\\
&&+\vt^2 N^{2-\alpha-2\gamma}\, \phi\hN(t,\vt)\,{\m\pi}^{\rm T}(\Lambda - \vr(t) \M-\vr'(t)I) F (\Lambda - \vr(t) \M-\vr'(t)I) \m 1\\
&&+\frac{\vt^2}2 N^{1-2\gamma} \phi\hN(t,\vt)\,{\m\pi}^{\rm T}(\Lambda + \vr(t) \M) \m1 - \vt \mu \pd{\phi\hN(t,\vt)}\vt - \pd{\phi\hN(t,\vt)}t + o(1).
\end{eqnarray*}
Directly from the definition of $\varrho(t)$, it is seen
that the first term on the right-hand side vanishes.
In addition, it takes some elementary algebra to check that
\[\m\pi^{\rm T}(\Lambda - (\vr(t)\mu+\vr'(t))I) F (\Lambda - (\vr(t)\mu+\vr'(t)I) \m1
=\m\pi^{\rm T}\Lambda D \Lambda \m1=:U,\]
where we used $F=D+\Pi=D+\m1\m\pi^{\rm T}$,
and
\[\frac12{\m\pi}^{\rm T}(\Lambda + \vr(t)\mu I) \m1= \lambda_\infty \left(1 - \frac{e^{-\mu t}}2 \right).\]
This results in the partial differential equation
\begin{eqnarray}
\lefteqn{\pd{\phi\hN(t,\vt)}t + \vt\mu\pd{\phi\hN(t,\vt)}\vt}\nonumber\\& =& 
\vt^2\phi\hN(t,\vt) \left(N^{2-\alpha-2\gamma}U +\frac12 N^{1-2\gamma} \lambda_\infty (1 - \frac12 e^{-\mu t}) \right) + o(1).\label{dvphi}
\end{eqnarray}

\subsection{Limit solution}
The last step in our proof is to obtain the limiting differential equation for $\phi(t,\vt)$, being the limit
of $\phi\hN(\vt,t).$ Its
unique solution corresponds to a normal distribution $\mathcal N(0,\sigma^2(t))$.

First, note that if we choose
$\gamma$ larger than both $1-\alpha/2$ and $1/2$, we do not obtain a {\sc clt},
but rather that the random variable under study converges in distribution to the constant $0$.
Hence, we take $\gamma=\max\{1-\alpha/2,1/2\}$, in which case the largest term
dominates in (\ref{dvphi}), with both terms contributing if $\alpha=1$. Note that this choice
is consistent with the restrictions on $\gamma$ we used during our proof.
We obtain by sending $N\to\infty$, 
\begin{equation}
\pd{\phi(t,\vt)}t + \vt \mu \pd{\phi(t,\vt)}\vt = \vt^2\phi(t,\vt)\,g(t),
\end{equation}
with $g(t) := U\, 1_{\{\alpha\le 1\}} + (\lambda_\infty (1 - e^{-\mu t}/2))\, 1_{\{\alpha\ge 1\}}$.

We propose the {\it ansatz} \[ \phi(t,\vt) = \exp\left(\frac12 \vt^2 e^{-2\mu t} f(t)\right), \] for some unknown function $f(t)$; recognize the mgf associated with the Normal distribution.
This leads to the following ordinary differential equation for $f(t)$:
\[ f'(t) = 2 e^{2\mu t} g(t), \]
which is obviously solved by integrating the right-hand side.
From this we immediately find the expression for the variance $\sigma^2(t)$ of the Normal distribution.

\vb

With this last step we have proven our claim. It is instructive to compare the findings with the expressions obtained in Section \ref{expli}.
\begin{thm} \label{TH1}
Consider Model {\sc i} or {\sc ii} with $\mu_i=\mu$ for all $i\in\{1,\ldots,d\}$.
The random variable
\[\frac{M^{(N)}(t) - N\vr(t)}{N^\gamma}\]
converges to a Normal distribution with zero mean and variance $\sigma^2(t)$ as $N\to\infty$;
here the parameter $\gamma$ equals $\max\{1-\alpha/2,1/2\}$, and $\sigma^2(t):=\sigma_m^2(t)1_{\{\alpha\le 1\}}+\varrho(t)1_{\{\alpha\ge 1\}},$
with
$\sigma_m^2(t) := \mu^{-1}(1- e^{-2\mu t})U$.
\end{thm}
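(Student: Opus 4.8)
The plan is to follow the five-step recipe laid out in Section~\ref{sec:Proof} essentially verbatim, since the theorem statement is exactly the special case ($\M=\mu I$) for which that section was written. First I would invoke Prop.~\ref{prop:M-de-transient} to get the differential equation~(\ref{MN-de-transient}) for the scaled pgf $\pgf\hN(t,z)$; postmultiplying by $F$ and $N^{-\alpha}$ and using $QF=\Pi-I$ gives the recurrence~(\ref{pN-rec}). Next I would quote Lemma~\ref{LEM1}, which already establishes that $N^{-1}M\hN(t)\to\vr(t)$ in probability, so the centering by $N\vr(t)$ is the correct one. The third step is the change of variables $z=\exp(\vt N^{-\gamma})$, $\pgf\hN(t,z)=\exp(\vr(t)\vt N^{1-\gamma})\mgf\hN(t,\vt)$, which, after substitution into~(\ref{pN-rec}) and dividing through by the exponential prefactor, produces the recurrence~(\ref{ptN-rec}) for the centered-and-normalized mgf $\mgf\hN$.

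The fourth step is the analytic core: Taylor-expand $z^{\pm1}=1\pm\vt N^{-\gamma}+\tfrac12\vt^2N^{-2\gamma}+O(N^{-3\gamma})$, substitute into~(\ref{ptN-rec}) to obtain~(\ref{ptN-rec-taylored}), and then iterate that relation once — replacing $\mgf\hN$ by $\mgf\hN\Pi$ plus a correction — so that every surviving term on the right carries a factor $\mgf\hN\Pi$ or is $o(N^{-\alpha})$; this is where the two provisional restrictions $\gamma>1/3$ and $\gamma\ge 1-\alpha/2$ are used, and where the identity $\mgf\hN=\mgf\hN\Pi+o(1)$ (valid for $\mgf\hN$ and its derivatives) lets one clean up the order terms. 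Postmultiplying the resulting equation~(\ref{iter}) by $\m1 N^\alpha$ and using $\Pi\m1=\m1$, $F\m1=\m1$, $\Pi=\m1\m\pi\TT$ collapses everything to a scalar PDE for $\phi\hN(t,\vt)=\mgf\hN(t,\vt)\m1$. Here the first-order-in-$\vt$ term vanishes exactly by the definition $\vr(t)=\vr(1-e^{-\mu t})$ (equivalently $\vr'(t)+\mu\vr(t)=\lambda_\infty$), the double-$F$ term simplifies via $F=D+\Pi$ and $\Pi\Lambda\cdots = \m1\lambda_\infty$ to $\m\pi\TT\Lambda D\Lambda\m1=U$ (using $\Pi D=D\Pi=0$), and the quadratic term reduces to $\lambda_\infty(1-\tfrac12 e^{-\mu t})$, yielding~(\ref{dvphi}).

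The fifth step is to set $\gamma=\max\{1-\alpha/2,1/2\}$, observe this is consistent with the restrictions imposed above (and that any larger $\gamma$ would send the limit to the degenerate constant $0$), and let $N\to\infty$ in~(\ref{dvphi}): the term $N^{2-\alpha-2\gamma}U$ survives iff $\alpha\le1$ and $N^{1-2\gamma}\lambda_\infty(1-e^{-\mu t}/2)$ survives iff $\alpha\ge1$, both contributing when $\alpha=1$, giving the limiting PDE with source $\vt^2\phi(t,\vt)g(t)$. Plugging in the ansatz $\phi(t,\vt)=\exp(\tfrac12\vt^2 e^{-2\mu t}f(t))$ reduces this to $f'(t)=2e^{2\mu t}g(t)$ with $f(0)=0$; integrating gives, for the M/M/$\infty$ part, $e^{-2\mu t}\int_0^t 2e^{2\mu s}\lambda_\infty(1-e^{-\mu s}/2)\,\rmd s$, which after the elementary integration and using $\lambda_\infty/\mu=\vr$ collapses to $\vr(1-e^{-\mu t})=\vr(t)$, and for the deviation-matrix part $e^{-2\mu t}\int_0^t 2e^{2\mu s}U\,\rmd s=\mu^{-1}(1-e^{-2\mu t})U=\sigma_m^2(t)$. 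This identifies $\sigma^2(t)=\sigma_m^2(t)1_{\{\alpha\le1\}}+\vr(t)1_{\{\alpha\ge1\}}$, and L\'evy's continuity theorem upgrades pointwise mgf/characteristic-function convergence to convergence in distribution; the rigorous justification that the limit of $\phi\hN$ exists and solves the limiting PDE uniquely is deferred to Appendix~A.

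I expect the main obstacle to be the bookkeeping in step four — controlling which Taylor-expansion and iteration remainders are genuinely $o(N^{-\alpha})$ under the standing assumptions on $\gamma$, and making precise the substitution $\mgf\hN\to\mgf\hN\Pi$ inside the order terms (which tacitly requires uniform-in-$\vt$ control on a neighborhood of $\vt=0$, together with equicontinuity of $\mgf\hN$ and its derivatives). The algebraic simplifications ($U=\m\pi\TT\Lambda D\Lambda\m1$, the vanishing first-order term, the final integration for $f$) are routine by comparison; the delicate point is that this argument establishes the limiting ODE \emph{assuming} a convergent subsequential limit with enough regularity exists, and the existence/uniqueness of that limit — i.e., tightness plus identification — is exactly what Appendix~A is there to supply.
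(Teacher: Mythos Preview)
Your proposal is correct and follows the paper's own proof essentially step for step --- you invoke the same differential equation (Prop.~\ref{prop:M-de-transient}), the same recurrence~(\ref{pN-rec}), the same change of variables leading to~(\ref{ptN-rec}), the same Taylor/iteration argument producing~(\ref{iter}) and~(\ref{dvphi}), the same choice of $\gamma$, and the same ansatz solution, with uniqueness deferred to Appendix~A. The only addition is that you carry out explicitly the final integration of $f'(t)=2e^{2\mu t}g(t)$ to verify the claimed $\sigma^2(t)$, which the paper leaves implicit.
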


\begin{cor} \label{TH1s}
Consider Model {\sc i} or {\sc ii} with $\mu_i=\mu$ for all $i\in\{1,\ldots,d\}$.
The random variable
\[\frac{M^{(N)} - N\vr}{N^\gamma}\]
converges to a Normal distribution with zero mean and variance $\sigma^2$ as $N\to\infty$;
here the parameter $\gamma$ equals $\max\{1-\alpha/2,1/2\}$, and 
$\sigma^2:=\sigma_m^2 1_{\{\alpha\le 1\}}+\varrho1_{\{\alpha\ge 1\}},$
with $\sigma_m^2 := \mu^{-1} U$.
\end{cor}

\section{Model {\sc i}: Stationary and transient distribution}
\label{sec:hazard-diff-eq}
In this section we briefly recall the results of the steps for Model {\sc i},
both for the stationary and time-dependent behavior.
The proofs are analogous to those in the previous section. Comparing the results of Lemma \ref{LEM3} and
Thm.\ \ref{THM1t} with those of Lemma 
\ref{LEM1} and Thm.\ \ref{TH1}, respectively, the effect of heterogeneous service rates becomes visible.

\begin{prop}
\label{prop:de-hazard}
Consider Model \,{\sc i}. In the stationary case
the pgf $\pgf(z)$ satisfies the following differential equation:
\[
    \pgf(z) Q = (z-1) \left(\od{\pgf(z)}z \M - \pgf(z) \Lambda\right).
\]
In the transient case the pgf $\pgf(t,z)$ satisfies the following differential equation:
\[ \pd{\pgf(t,z)}t = \pgf(t,z)\, Q + (z-1)\left(\pgf(t,z)\, \Lambda - \pd{\pgf(t,z)}z \M \right). \]
\end{prop}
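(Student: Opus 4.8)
The plan is to prove the two displayed identities separately, in both cases starting from the global balance (Chapman--Kolmogorov) equations of the infinite-dimensional transition rate matrix shown above. For the \emph{transient} case, I would simply observe that Model~{\sc i} is exactly the setting of Proposition~\ref{prop:M-de-transient}: since every job present at time $t$ is subject to the same hazard rate $\mu_{J(t)}$, there is only a single ``type'' of job in the system, so that $(M(t),J(t))$ is governed by precisely the transition rate matrix written before Proposition~\ref{prop:M-de-transient}. Hence the transient differential equation is literally the one already established there, and no new argument is needed; I would state this and refer back.

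For the \emph{stationary} case I would argue directly. Writing $[\pgf_k]_j := {\mathbb P}(M=k,\,J=j)$ and putting $\pgf_{-1}:=0$, the balance equations read, for every $k\in\mathbb N$,
\[
0 = \pgf_{k-1}\Lambda + \pgf_k(Q - \Lambda - k\M) + (k+1)\pgf_{k+1}\M,
\]
which is the stationary analogue of (\ref{eq:de}) (its left-hand side set to zero). Multiplying by $z^k$, summing over $k\in\mathbb N$, and using the same two standard identities $\sum_k(k+1)\pgf_{k+1}z^k = \od{\pgf(z)}z$ and $\sum_k k\,\pgf_k z^k = z\,\od{\pgf(z)}z$ as in the proof of Proposition~\ref{prop:M-de-transient}, the terms collect into
\[
0 = z\,\pgf(z)\Lambda + \pgf(z)(Q-\Lambda) - z\,\od{\pgf(z)}z\,\M + \od{\pgf(z)}z\,\M,
\]
which rearranges to $\pgf(z)Q = (z-1)\left(\od{\pgf(z)}z\,\M - \pgf(z)\Lambda\right)$, as claimed. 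An equivalent route, which I would mention as an alternative, is to invoke ergodicity of $(M(t),J(t))$: then $\pgf(t,z)\to\pgf(z)$ and $\partial_t\pgf(t,z)\to0$ as $t\to\infty$ for each fixed $z\in[0,1]$, and letting $t\to\infty$ in the transient equation of Proposition~\ref{prop:M-de-transient} gives the same conclusion.

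Since the argument only repeats manipulations already carried out, I do not expect a genuine obstacle. The only points deserving a word of care are the term-by-term differentiation and rearrangement of the power series defining $\pgf(z)$ and $\od{\pgf(z)}z$ --- legitimate for $|z|<1$ by absolute convergence, and extending to $z=1$ since the mean number in system is finite --- and, in the limiting route, the interchange giving $\partial_t\pgf(t,z)\to0$, which follows from convergence of each $\pgf_k(t)$ together with a dominated-convergence argument.
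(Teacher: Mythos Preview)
Your proposal is correct and follows exactly the route the paper takes: the paper does not give an independent proof of Proposition~\ref{prop:de-hazard} but simply remarks that the proofs are analogous to those in Section~\ref{sec:Proof}, and indeed the transient part is literally Proposition~\ref{prop:M-de-transient} (which, as noted there, holds for general $\M$), while the stationary part is the obvious specialization obtained by setting the time derivative in (\ref{eq:de}) to zero and repeating the generating-function manipulation.
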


Define ${\varrho}\hI: = \lambda_\infty/\mu_\infty$, and ${\varrho}\hI(t) = \varrho\hI\,(1-e^{-\mu_\infty t})$.

\begin{lem}\label{LEM2} Consider Model\, {\sc i}.
As $N\rightarrow \infty$,
\begin{itemize}
\item[(1)] $N^{-1} M^{(N)}(t)$ converges in probability to  ${\varrho}\hI(t)$.
\item[(2)] $N^{-1} M^{(N)}$ converges in probability to ${\varrho}\hI$.
\end{itemize}

\end{lem}

\begin{thm}\label{THM1s} Consider Model\, {\sc i}.
The random variable
\[{\frac{M^{(N)}(t) - {N}{\varrho\hI(t)}}{N^{\gamma}}}\]
converges to a Normal distribution with zero mean and variance $\sigma^2(t)$ as $N\to\infty$;
here\\
$\sigma^2(t):=\sigma_m^2(t)1_{\{\alpha\le 1\}}+\varrho\hI(t)1_{\{\alpha\ge 1\}},$
with
\[ \sigma_m^2(t) := 2 e^{-2\mu_\infty t} \int_0^t e^{2\mu_\infty s}\bpi^{\rm T} (\Lambda - {\varrho}\hI(s) \M) D (\Lambda - {\varrho}\hI(s) \M) \m1\,{\rm d}s. \]
The random variable
\[ {\frac{M^{(N)} - {N}{\varrho}\hI}{N^{\gamma}}}\]
converges to a Normal distribution with zero mean and variance $\sigma^2$ as $N\to\infty$;
here\\
$\sigma^2:=\sigma_m^2 1_{\{\alpha\le 1\}}+{\varrho}\hI1_{\{\alpha\ge 1\}}$, with \[\sigma_m^2 :=  \mu_\infty^{-1} \bpi^{\rm T}(\Lambda -{\varrho}\hI \M) D(\Lambda-{\varrho} \hI\M) \m 1.\]
In both cases the parameter $\gamma$ equals $\max\{1-\alpha/2,1/2\}$.
\end{thm}

The formula for $\sigma^2_m(t)$ can be evaluated more explicitly. Define $G_{m,n}(t):=e^{-m\mu_\infty t}-e^{-n\mu_\infty t}$, for $m,n\in{\mathbb N}$. 
Direct computations yield that $\sigma_m^2(t)$ equals
\[U\frac{1}{\mu_\infty}G_{0,2}(t)+ \hat U\,\frac{\varrho\hI}{\mu_\infty}\left(2G_{1,2}(t)-G_{0,2}(t)\right) +\check U\, \frac{(\vr\hI)^2}{\mu_\infty}\left(G_{0,2}(t)-4 G_{1,2}(t)+2\mu_\infty t e^{-2\mu_\infty t }\right),\]
with $\hat U:= \m\pi^{\rm T}{\M}D\Lambda\m 1+ \m\pi^{\rm T}\Lambda D{\M}\m 1$ and $\check U:=\m\pi^{\rm T}{\M}D{\M}\m 1.$
It is readily verified that $\sigma_m^2(t)\to\sigma^2_m$ as $t\to
\infty$, as expected. 

\section{Results for Model {\sc ii}}
\label{sec:Mod2}
In this section we study Model {\sc ii}: the service
times are now determined by the background state as seen by the job upon  arrival.
The approach is as before: we first derive a system of differential equations
(Section \ref{pgfII}), 
then establish the mean behavior by means of laws of large numbers (Section \ref{llnII}), and
finally derive the {\sc clt}\,s (Section \ref{cltII}).

\subsection{Differential equations for the pgf $\pgf$.}\label{pgfII}
For the transient distribution, a system of differential equations
was previously derived in \cite{BKMT}.
{It} is based on the observation that $M(t)$ has a Poisson distribution
with (random) parameter $\varphi(J)$, see (\ref{Pr}). 
The intuition behind this formula is that a job arriving at time $s$ survives
in the system until time $t$ with probability $e^{-\mu_i\,(t-s)}$ (assuming
that the background process is in state $i$), which is distributionally
equivalent with `thinning' the Poisson parameter with exactly this fraction.
This description yields,  after some manipulations, the following
differential equation for the pgf, the row vector $\pgf(t,z)$:
\begin{equation}
\label{dvII} \frac{\partial {\boldsymbol p}(t,z)}{\partial t} = \pgf(t,z) \tilde Q+ (z-1) \pgf(t,z) \Delta(t), \end{equation}
where $\tilde Q =(\tilde q_{ij})_{i,j=1}^d$ is the transition rate matrix of the time-reversed version of $J(\cdot)$ (i.e., $\tilde q_{ij}:= q_{ji}\pi_j/\pi_i$),
and $\Delta(t)$ denotes
a diagonal matrix with entries $[\Delta(t)]_{ii} := \lambda_i \exp(-\mu_it)$.

\begin{rem} {\em It is noted that the definition of $\boldsymbol p$ is slightly different from the one used in \cite{BKMT}. In the present paper 
we consider the generating function of the number of jobs present at time $t$
\emph{jointly with the state of the background process at time $t$}, whereas \cite[Prop.\ 2]{BKMT} considers 
the generating function of the number of jobs present at time $t$
 \emph{conditioned on the background state at time $0$}.
As a consequence, we obtain a slightly different equation, but it is easy to translate them into each other.
\hfill$\diamondsuit$}\end{rem}

Our objective is to set up our proof such that it facilitates proving both the transient and stationary {\sc clt}. 
Na\"ively, one could try to obtain a differential equation for the stationary behavior by sending $t\to\infty$ in (\ref{dvII}), but it is readily checked that this yields a trivial relation only: ${\boldsymbol 0} ={\boldsymbol 0} .$ A second na\"ive approach would be to establish the {\sc clt} for $M^{(N)}(t)$, and to send then $t$ to $\infty$; it is clear, however, that this procedure relies on interchanging two limits
($N\to\infty$ and $t\to\infty$), of which a formal justification is lacking.

We therefore resort  to an alternative approach. It relies on 
a description based on a more general state space: we do not only keep track of the 
number of jobs present,
but we rather record  the
numbers of jobs present {\it of each type}, where `type' refers to the state of the background process upon arrival. To this end, we introduce the $d$-dimensional stochastic
process \[{\boldsymbol M}(t)=\left(M_1(t), \ldots, M_d(t)\right)_{t\in\mathbb R},\]
where the $k$-th entry denotes the number of particles of type $k$ in the
system at time~$t$. The transient and stationary  total numbers of jobs present are denoted by
\[M(t):= \sum_{k=1}^d M_k(t),\:\:\:\:\:M:= \sum_{k=1}^d M_k,\]
respectively. As usual,  we add a superscript $\hN$ when working with the model in which 
imposed our scaling on the arrival rates and the transition rates of the background process.

\newcommand{\vz}{{\m z}}

\vb

As before, we first derive a differential equation for the unscaled model.
The generating function $\pgf(t,\vz)$ is defined as follows:
\[ [\pgf(t,\vz)]_j = \ee\left( \prod_{k=1}^d z_k^{M_k(t)} 1_{\{J(t)=j\}} \right). \]
In addition, $E_k$ is a matrix for which $[E_k]_{kk} = 1$, and whose other entries are zero.
For a row vector ${\m q}$, the
multiplication ${\m q}\, E_k$ thus results in a (row) vector which leaves the $k$-th entry of ${\m q}$ unchanged
while the other entries become zero.
The following result covers the transient case.

\begin{prop} \label{pgfIIt} Consider Model \,{\sc ii}.
The pgf $\pgf(t,\vz)$
satisfies the following differential equation:
\[ \pd{\pgf(t,\vz)}t = \pgf(t, \vz) Q + \sum_{k=1}^d (z_k-1) \left(\lambda_k \,\pgf(t,\vz)\, E_k- \mu_k \pd{\pgf(t,\vz)}{z_k}\right). \]
\end{prop}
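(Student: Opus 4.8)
The plan is to mimic the derivation of Proposition \ref{prop:M-de-transient}, but now tracking the full type vector $\boldsymbol M(t) = (M_1(t),\ldots,M_d(t))$ rather than just the total. First I would set up the bivariate Markov process $(\boldsymbol M(t), J(t))_{t\ge 0}$ on $\mathbb N^d \times \{1,\ldots,d\}$ and identify its generator. The dynamics are: while $J(t)=j$, (a) the background jumps $j\to i$ at rate $q_{ji}$, leaving $\boldsymbol M$ unchanged; (b) a new job of type $j$ arrives at rate $\lambda_j$, incrementing $M_j$ by one; (c) each of the $M_k$ jobs of type $k$ departs at rate $\mu_k$, decrementing $M_k$ by one, and crucially the departure rate of a type-$k$ job does \emph{not} depend on the current state $J(t)$ — this is exactly the Model {\sc ii} feature and is what makes the resulting PDE have the product-over-$k$ structure with $\mu_k$ attached to $\partial/\partial z_k$.

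Next I would write the forward (Chapman--Kolmogorov) equations for $[\pgf_{\boldsymbol k}(t)]_j := \prob{\boldsymbol M(t)=\boldsymbol k, J(t)=j}$, the analogue of Eqn.\ (\ref{eq:de}): for each multi-index $\boldsymbol k \in \mathbb N^d$,
\[
\frac{\rm d}{{\rm d}t}\pgf_{\boldsymbol k}(t) = \pgf_{\boldsymbol k}(t)\Big(Q - \sum_{j}\lambda_j E_j - \sum_{k} k_k \mu_k I\Big) + \sum_{j}\lambda_j\, \pgf_{\boldsymbol k - e_j}(t) E_j + \sum_{k}(k_k+1)\mu_k\, \pgf_{\boldsymbol k + e_k}(t),
\]
with the convention $\pgf_{\boldsymbol k}(t) := 0$ if any component of $\boldsymbol k$ is negative. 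The arrival term carries the matrix $E_j$ because an arriving job of type $j$ can only occur when $J(t)=j$. Then I multiply by $\prod_k z_k^{k_k}$ and sum over $\boldsymbol k \in \mathbb N^d$. The standard one-variable generating-function identities get applied coordinatewise: $\sum_{\boldsymbol k} k_k \pgf_{\boldsymbol k}(t)\prod_\ell z_\ell^{k_\ell} = z_k \,\partial \pgf(t,\vz)/\partial z_k$ and $\sum_{\boldsymbol k}(k_k+1)\pgf_{\boldsymbol k+e_k}(t)\prod_\ell z_\ell^{k_\ell} = \partial \pgf(t,\vz)/\partial z_k$. Collecting terms, the arrival part contributes $\sum_k (z_k - 1)\lambda_k \pgf(t,\vz) E_k$ and the service part contributes $-\sum_k (z_k-1)\mu_k\, \partial\pgf(t,\vz)/\partial z_k$, giving precisely the stated equation; the background part contributes the $\pgf(t,\vz)Q$ term directly.

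I would double-check that summing all $d$ coordinates recovers the known scalar equation (\ref{dvII}): setting $z_k = z$ for all $k$ and using $\sum_k \pgf(t,\vz)E_k = \pgf(t,\vz)$ should, after accounting for the transpose/time-reversal bookkeeping in the Remark, be consistent with the $\tilde Q$, $\Delta(t)$ formulation — this is a useful sanity check but not strictly needed for the proof. The main (mild) obstacle is bookkeeping: being careful that the arrival operator is $\lambda_k \pgf E_k$ (projection onto coordinate $k$) and not $\lambda_k \pgf$, and that the combinatorial factors $(k_k+1)$ from the departure transitions collapse correctly to a clean first-order derivative. There is no real analytic difficulty — the argument is ``classical,'' exactly as for Proposition \ref{prop:M-de-transient} — so the proof will be short, essentially the computation above written out.
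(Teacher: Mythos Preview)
Your proposal is correct and matches the paper's approach exactly: the paper states that the proof ``follow[s] the same lines as before: we consider the generator of the Markov process, and transform the Kolmogorov equation,'' which is precisely the multi-type forward-equation computation you outline. Your bookkeeping on the arrival term $\lambda_k\,\pgf\,E_k$ and the departure identities is right, and the derivation goes through without incident.
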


With the pgf $\pgf(z_1,\ldots,z_d)$ defined in the obvious way,
the differential equation for the stationary case is the following.

\begin{prop} \label{pgfIIs} Consider Model \,{\sc ii}.
The pgf $\pgf(\vz)$ satisfies the following differential equation:
\[ 0 = \pgf(\vz) Q + \sum_{k=1}^d (z_k-1) \left( \lambda_k\, \pgf(\vz)\, E_k-\mu_k \pd{\pgf(\vz)}{z_k}\right). \]
\end{prop}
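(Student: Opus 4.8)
The plan is to establish Proposition~\ref{pgfIIs} as the stationary analogue of Proposition~\ref{pgfIIt}, using the multi-type description $\boldsymbol M = (M_1,\ldots,M_d)$ introduced above. First I would set up the Chapman--Kolmogorov (balance) equations for the bivariate stationary process $(\boldsymbol M, J)$ on the state space ${\mathbb N}^d \times \{1,\ldots,d\}$. For a fixed type-vector ${\m k} = (k_1,\ldots,k_d)$ and background state $j$, the transitions out of a state are: a background jump $j \to j'$ at rate $q_{jj'}$ (leaving ${\m k}$ unchanged), an arrival in state $j$ at rate $\lambda_j$ (incrementing $k_j$ by one), and a departure of one of the $k_m$ type-$m$ jobs at rate $k_m \mu_m$ for each $m$ (decrementing $k_m$ by one). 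Writing $\pgf_{\m k}$ for the stationary row vector with $[\pgf_{\m k}]_j = {\mathbb P}(\boldsymbol M = {\m k}, J=j)$, the balance equation reads, in matrix form,
\[
0 = \pgf_{\m k}\,(Q - \Lambda - \textstyle\sum_{m} k_m \mu_m I) + \sum_{m} \lambda_m\, \pgf_{{\m k}-{\m e}_m}\, E_m + \sum_{m} (k_m+1)\mu_m\, \pgf_{{\m k}+{\m e}_m},
\]
with the convention that $\pgf_{\m k} = {\m 0}$ whenever any coordinate of ${\m k}$ is negative.

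The second step is to multiply through by $\prod_{m} z_m^{k_m}$ and sum over all ${\m k}\in{\mathbb N}^d$. The key bookkeeping identities — entirely parallel to those in the proof of Proposition~\ref{prop:M-de-transient} — are
\[
\sum_{{\m k}} (k_m+1)\,\pgf_{{\m k}+{\m e}_m}\,\textstyle\prod_{l} z_l^{k_l} = \pd{\pgf(\vz)}{z_m}, \qquad
\sum_{{\m k}} k_m\,\pgf_{{\m k}}\,\textstyle\prod_{l} z_l^{k_l} = z_m\,\pd{\pgf(\vz)}{z_m},
\]
and the arrival term contributes $\sum_{\m k}\lambda_m\pgf_{{\m k}-{\m e}_m}E_m\prod_l z_l^{k_l} = z_m\,\lambda_m\,\pgf(\vz)\,E_m$. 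Collecting terms gives
\[
0 = \pgf(\vz)(Q-\Lambda) + \sum_{m} z_m\lambda_m\,\pgf(\vz)\,E_m + \sum_{m}\mu_m\Big(\pd{\pgf(\vz)}{z_m} - z_m\pd{\pgf(\vz)}{z_m}\Big).
\]
Finally one regroups: since $\Lambda = \sum_m \lambda_m E_m$, the $-\pgf(\vz)\Lambda$ term combines with $\sum_m z_m\lambda_m\pgf(\vz)E_m$ to yield $\sum_m(z_m-1)\lambda_m\pgf(\vz)E_m$, while the $\mu_m$ terms collapse to $-\sum_m(z_m-1)\mu_m\,\partial\pgf(\vz)/\partial z_m$, producing exactly the claimed identity. (The transient case, Proposition~\ref{pgfIIt}, is identical except that the left-hand side $0$ is replaced by $\partial\pgf(t,\vz)/\partial t$, coming from the time-derivative term in the forward Kolmogorov equations; one may simply remark that Proposition~\ref{pgfIIs} is obtained from Proposition~\ref{pgfIIt} by setting the time-derivative to zero, which is justified since the multi-type process is ergodic.)

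There is no deep obstacle here; the proof is a routine generating-function manipulation of birth-death-type balance equations, and the only points requiring a little care are (i) justifying that the bivariate process $(\boldsymbol M, J)$ is positive recurrent so that a stationary distribution exists and the generating-function series converge on the closed polydisc $|z_m|\le 1$ — this follows because each $M_m$ is stochastically dominated by an independent $M/M/\infty$ queue with arrival rate $\max_i\lambda_i$ and service rate $\min_i\mu_i$ (or one can invoke the explicit mixed-Poisson representation via $\varphi$ in~\eqref{Pr}) — and (ii) the interchange of summation and differentiation, which is legitimate inside the radius of convergence. I would present the argument compactly, deriving Proposition~\ref{pgfIIt} first and then stating Proposition~\ref{pgfIIs} as its stationary specialization, so that the bulk of the bookkeeping is done only once.
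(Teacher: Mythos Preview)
Your proposal is correct and follows exactly the approach the paper indicates: it writes down the invariance (balance) equations for the multi-type process $(\boldsymbol M,J)$ and transforms them into generating-function form via the standard bookkeeping identities, precisely as the paper sketches when it says the proofs ``follow the same lines as before'' by transforming the Kolmogorov/invariance equations. Your added remarks on positive recurrence and the legitimacy of the term-by-term differentiation are more careful than what the paper supplies, but the core argument is the same.
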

The proofs of these propositions are straightforward, and follow the same lines as before: we consider
the generator of the Markov process, and transform the Kolmogorov equation (for
the transient case) and the invariance equation (for the stationary case).

The partial differential equation for the transient scaled model follows directly from Prop.~\ref{pgfIIt}, by replacing
$\lambda_k$ by $N\lambda_k$, and $Q$ by $N^\alpha Q$. It results in
\begin{equation}\label{MN2-de-transient}
 \pd{\pgf\hN(t,\vz)}t = N^{\alpha} \pgf\hN(t,\vz)\, Q + 
 \sum_{k=1}^d (z_k-1) \left(N\lambda_k \,\pgf\hN(t,\vz)\, E_k-
 \mu_k \pd{\pgf\hN(t,\vz)}{z_k}\right).
\end{equation}
The stationary case can be dealt with analogously, relying on Prop.\ \ref{pgfIIs}.

\vb

Our objective is to
derive the {\sc clt} for both the transient and stationary case. We do so by presenting the full analysis for the transient case; in the stationary case we can leave out one term. 
Importantly, this approach does not have the problem of illegitimately interchanging two limits.

\subsection{Mean behavior}\label{llnII}
As before, we first derive the law of large numbers.
Again we rewrite the differential equations (\ref{MN2-de-transient}) as a recurrence relation
for $\pgf\hN$ that involves the fundamental matrix $F$:
\begin{eqnarray} \pgf\hN(t, \vz) &=& \pgf\hN(t, \vz) \Pi +N^{-\alpha} \sum_{k=1}^d (z_k-1) \left(N \lambda_k\, \pgf\hN(t,\vz)\, E_k  - \mu_k \pd{\pgf\hN(t,\vz)}{z_k}\right) F\nonumber\\&& -\, N^{-\alpha} \pd{\pgf\hN(t,\vz)}t F \label{fund}\end{eqnarray}
for the transient case,
and likewise for the stationary case.

The following lemma  establishes weak laws of large numbers for ${\boldsymbol M}^{(N)}(t)$ and $M\hN(t)$, as well as their steady-state counterparts
$ {\boldsymbol M}^{(N)}$ and $M\hN$. 
We first define 
\[{\varrho_k\hII(t)} := \pi_k \frac{\lambda_k}{\mu_k} \, (1 - e^{-\mu_k t}),\:\:\:\:\:
{\varrho\hII_k} := \pi_k \frac{\lambda_k}{\mu_k}.\] 
Also, 
$\varrho\hII(t):=\sum_k \varrho\hII_k(t)$ 
and $\varrho\hII:=\sum_k \varrho\hII_k.$

\begin{lem}\label{LEM4} Consider Model \,{\sc ii}. As $N\to\infty$,
\begin{itemize}
\item[(1)]
$N^{-1} {\boldsymbol M}^{(N)}(t)$ converges in probability
to ${\boldsymbol\varrho}\hII(t)$.
\item[(2)]
$N^{-1} {\boldsymbol M}^{(N)}$ converges in probability to
${\boldsymbol\varrho}\hII$.
\item[(3)] $N^{-1}M\hN(t)$ converges in probability to $\varrho\hII(t)$,
and $N^{-1}M\hN$ to $\varrho\hII$.
\end{itemize}
\end{lem}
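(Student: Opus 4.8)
The plan is to carry out, in the multivariate setting, the argument that underlies Lemma~\ref{LEM1}. I would introduce the row-vector-valued scaled moment generating function $\llgf\hN(t,\m\vt):=\pgf\hN(t,\vz)$ with $z_k\equiv z_k\hN(\vt_k)=\exp(\vt_k/N)$, and write $\ell\hN(t,\m\vt):=\llgf\hN(t,\m\vt)\,\m1$ for the associated scalar mgf. Since $[\llgf\hN(t,\m\vt)]_j=\ee\big(\exp(N^{-1}\sum_{k=1}^d\vt_k M_k\hN(t))\,1_{\{J\hN(t)=j\}}\big)$, the function $\ell\hN(t,\m\vt)$ is precisely the moment generating function of the random vector $N^{-1}\m M\hN(t)$; this is the object whose convergence we want. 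As in the proof of Lemma~\ref{LEM1} one has $\partial_t\llgf\hN=\partial_t\pgf\hN$ and $\partial_{\vt_k}\llgf\hN=(z_k/N)\,\partial_{z_k}\pgf\hN$.

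First I would substitute these relations into the recurrence~(\ref{fund}), using $z_k-1=\vt_k/N+O(N^{-2})$ and $N(z_k-1)/z_k=\vt_k+O(N^{-1})$, which gives
\[
\llgf\hN(t,\m\vt)=\llgf\hN(t,\m\vt)\,\Pi+N^{-\alpha}\sum_{k=1}^d\vt_k\Big(\lambda_k\,\llgf\hN(t,\m\vt)\,E_k-\mu_k\,\pd{\llgf\hN(t,\m\vt)}{\vt_k}\Big)F-N^{-\alpha}\,\pd{\llgf\hN(t,\m\vt)}{t}\,F+o(N^{-\alpha}).
\]
This forces $\llgf\hN=\llgf\hN\,\Pi+O(N^{-\alpha})$, and the same for its partial derivatives, so inside the $O(N^{-\alpha})$-bracket every occurrence of $\llgf\hN$ may be replaced by $\llgf\hN\,\Pi=\ell\hN\,\m\pi^{\rm T}$. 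Postmultiplying by $\m1\,N^\alpha$ and using $\Pi\m1=F\m1=\m1$ together with $\m\pi^{\rm T}E_k\m1=\pi_k$, the row-vector relation collapses to the scalar partial differential equation
\[
\pd{\ell\hN(t,\m\vt)}{t}=\sum_{k=1}^d\vt_k\Big(\lambda_k\pi_k\,\ell\hN(t,\m\vt)-\mu_k\,\pd{\ell\hN(t,\m\vt)}{\vt_k}\Big)+o(1).
\]
Letting $N\to\infty$ and writing $\ell$ for the limit yields a linear first-order PDE; one verifies directly --- using $(\varrho_k\hII)'(t)=\pi_k\lambda_k e^{-\mu_k t}$ --- that the ansatz $\ell(t,\m\vt)=\exp\big(\sum_{k=1}^d\vt_k\varrho_k\hII(t)\big)$ solves it as well as the conditions $\ell(t,\m0)=1$ and $\ell(0,\m\vt)=1$ (the latter because the system starts off empty). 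As this is the mgf of the constant vector ${\m\varrho}\hII(t)$, L\'evy's continuity theorem gives convergence in distribution of $N^{-1}\m M\hN(t)$ to ${\m\varrho}\hII(t)$, and convergence to a constant upgrades to convergence in probability, which is part~(1). Part~(3) for the transient case then follows by the continuous mapping theorem applied to $\m x\mapsto\sum_{k=1}^d x_k$, or equivalently by taking all $\vt_k$ equal in the above.

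For the stationary statements~(2) and the stationary half of~(3), I would rerun the same steps starting from the recurrence obtained by postmultiplying the identity in Prop.~\ref{pgfIIs} by $F\,N^{-\alpha}$; this is~(\ref{fund}) with the $\partial_t$-term deleted, so the identical manipulations lead to the limiting relation $0=\sum_{k=1}^d\vt_k\big(\lambda_k\pi_k\,\ell(\m\vt)-\mu_k\,\partial_{\vt_k}\ell(\m\vt)\big)$, whose mgf solution is $\ell(\m\vt)=\exp\big(\sum_{k=1}^d\vt_k\varrho_k\hII\big)$: differentiating this relation repeatedly at $\m\vt=\m0$ pins down all moments, and they coincide with those of the constant ${\m\varrho}\hII$. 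This yields~(2), and~(3) follows as before. An attractive feature is that this direct route never interchanges the limits $N\to\infty$ and $t\to\infty$.

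The step I expect to require the most care is the justification that the limiting PDE (respectively functional equation) has the stated ansatz as its \emph{unique} solution within the class of moment generating functions --- and, relatedly, that the family $\{\llgf\hN\}_N$ genuinely converges rather than merely admitting the ansatz as a candidate limit point --- together with the bookkeeping needed to verify that the collected remainder terms are truly $o(N^{-\alpha})$, uniformly in $(t,\m\vt)$ on the range that matters; these uniqueness issues are handled in Appendix~A.
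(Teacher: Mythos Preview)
Your proposal is correct and follows essentially the same route as the paper's own proof: introduce the scaled mgf $\llgf\hN(t,\m\vt)=\pgf\hN(t,\vz)$ with $z_k=\exp(\vt_k/N)$, substitute into the recurrence~(\ref{fund}), use $\llgf\hN=\llgf\hN\Pi+O(N^{-\alpha})$ and postmultiply by $\m1\,N^\alpha$ to obtain the scalar limiting PDE, then verify the exponential ansatz and handle the stationary case by dropping the $\partial_t$-term. Your write-up is slightly more explicit about invoking L\'evy's continuity theorem and the continuous mapping theorem for part~(3), but the argument is otherwise identical to the paper's.
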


\newcommand{\vth}{{\m\vt}}

\begin{proof}
Similarly to the proof of Lemma \ref{LEM1}, we first introduce the scaled moment generating function $\llgf\hN(t,\vth) :=
\pgf\hN(t,\vz),$
with $z_k\equiv z_k\hN({\vartheta}_k)=\exp({\vartheta}_k /N)$, for $k=1,\ldots,d$. We see immediately that
\[ \pd{\llgf\hN(t,\vth)} t = \pd{\pgf\hN(t,\vz)} t,\:\:\:\:\:\:
\pd{\llgf\hN(t,\vth)}{{\vartheta}_k} =  \pd{\pgf\hN(t,\vz)} {z_k} \od{z_k}{{\vartheta}_k} = \frac{z_k}{N} \,\pd{\pgf(t,\vz)}{z_k}. \]

Now we substitute these expressions in Eqn.\ \eqref{fund}, and note
that $z_k^{\pm1} = 1\pm {\vartheta}_k N^{-1} + O(N^{-2})$. As a consequence,
\begin{eqnarray*} \llgf\hN(t,\vth) &=& \llgf\hN(t,\vth) \Pi+N^{-\alpha} \sum_{k=1}^d {\vartheta}_k \left(\lambda_k \,\llgf\hN(t,\vth)\, E_k - \mu_k \pd{{\llgf}\hN(t,\vth)}{{\vartheta}_k}\right) F\\&& -\,  N^{-\alpha} \pd{{\llgf}\hN(t,\vth)}t F + o(N^{-\alpha}). \end{eqnarray*}
It directly follows  that $ \llgf\hN(t,\vth) = \llgf\hN(t,\vth)\Pi + O(N^{-\alpha})$, and hence also 
\[\frac{\partial\llgf\hN(t,\vth)}{\partial t} = \frac{\partial\llgf \hN(t,\vth)}{\partial t}  \,\Pi + O(N^{-\alpha}),\:\:\:\:\:
\frac{\partial \llgf\hN(t,\vth)}{\partial {\vartheta}_k} =\frac{\partial\llgf \hN(t,\vth)}{\partial {\vartheta}_k}\,\Pi + O(N^{-\alpha}).\] 
The next step is to postmultiply the previous display by $\m 1\, N^{\alpha}$, 
and after some elementary  steps we obtain the following scalar partial differential equation in $\llgf\hN(t,\vth) \m1$:
\[ \pd{(\llgf\hN(t,\vth) \m1)}t = \sum_{k=1}^d \vt_k \left(
\pi_k \lambda_k(\llgf\hN(t,\vth) \m1)-\mu_k \pd{(\llgf\hN(t,\vth) \m1)}{\vt_k}
\right) + o(1). \]
Now let $N\to\infty$; define $\llgf(t,\vth) \m1:=\lim_{N\to\infty} \llgf\hN(t,\vth) \m1 .$
We propose the following form for the limiting function $\llgf(t,\vth) \m1$:
\[ \llgf(t,\vth) \m1 = \exp\left( \sum_{k=1}^d {\vartheta}_k \bar\varrho_k(t) \right), \]
for specific functions $\bar\varrho_k(\cdot)$ (to be determined later).
Plugging this form into the differential equation, it means that the following
equation must be fulfilled by the $\bar\varrho_k(\cdot)$:
\[ \sum_{k=1}^d {\vartheta}_k \left(  \bar\varrho'_k(t) -  \pi_k \lambda_k + \mu_k \bar\varrho_k(t) \right) = 0. \]
As this must hold for any ${\vartheta}_k$, this equation leads to a separate differential equation for
every $\bar \varrho_k(t)$, which moreover agrees with the one in the first part of the claim  ($\bar \varrho_k(t)=\varrho\hII_k(t)$, that is). We conclude that we have established the claim for the transient case: $N^{-1} {\boldsymbol M}^{(N)}(t)$ converges in probability
to ${\boldsymbol\varrho}\hII(t)$  as $N\to\infty.$

For the stationary case, we can follow precisely the same procedure, but without
the partial derivative with respect to time, so that we now end up with a differential equation
in $\llgf(\vth) \m1$ as follows:
\[ 0 = \sum_{k=1}^d \vt_k \left(\pi_k \lambda_k (\llgf(\vth) \m1) - 
\mu_k \pd{(\llgf(\vth) \m1)}{\vt_k}\right) , \]
for which $\llgf(\vth) \m1 = \exp(\sum_{k=1}^d {\vartheta}_k \varrho_k \hII)$ forms
a solution. This completes the proof of the second claim. The third claim follows trivially.
 \end{proof}

\subsection{Central limit theorems}\label{cltII}

Next, we state and prove the {\sc clt} result for Model {\sc ii}.
To this end, we first define the (symmetric) matrices $V(t)$ and $V:=\lim_{t\to\infty} V(t)$ with entries
\[ [V(t)]_{jk} := \frac{\lambda_j\lambda_k[\bar D]_{jk}}{\mu_j + \mu_k} (1 - e^{-(\mu_{{j}}+\mu_{{k}})t}),\:\:\:\:\:
[V]_{jk}=\frac{\lambda_j\lambda_k[\bar D]_{jk}}{\mu_j + \mu_k} ;\]
here $\bar D$ denotes the (symmetric) matrix defined by~$[\bar D]_{jk} = (\pi_j [D]_{jk} + \pi_k [D]_{kj})$. Also, $C:=\lim_{t\to\infty}C(t)$, where
\[[C(t)]_{jk}:=[V(t)]_{jk}1_{\{\alpha\le 1\}}+\varrho\hII_j(t)1_{\{\alpha\ge 1\}}1_{\{j=k\}}.\]

The following theorem is the main result of this section. 

\begin{thm}\label{THMM2} Consider Model \,{\sc ii}.
The random vector
\[ {\frac{{\boldsymbol M}\hN(t) - {N}{\boldsymbol \varrho}\hII(t)}{N^\gamma}}\]
converges to a $d$-dimensional Normal distribution with zero mean and covariance matrix $C(t)$ as $N\to\infty$.
In both cases the parameter $\gamma$ equals $\max\{1-\alpha/2,1/2\}$.
The random vector
\[{\frac{{\boldsymbol M}\hN - {N}{\boldsymbol \vr}\hII}{N^\gamma}}\]
converges to a $d$-dimensional Normal distribution with zero mean and covariance matrix $C$ as $N\to\infty$.
\end{thm}

\begin{proof}
Mimicking the proof of the {\sc clt} in Section \ref{sec:Proof}, we start again with setting up
a recurrence relation for the centered and normalized mgf $\mgf\hN$.
Define $\vz$ by $z_k\equiv z\hN_k(\vt_k) :=\exp(\vt_k N^{-\gamma})$, for
$k=1,\ldots, d$, with the value of $\gamma$ to be determined later on.
We first concentrate on the transient case and introduce the
centered and normalized mgf $\mgf(t,\vth)$:
\[\mgf\hN(t,\vth) = \exp\left(-N^{1-\gamma} \sum_{k=1}^d {\vartheta}_k \varrho\hII_k(t)\right) \pgf\hN\left(t,\vz \right).\]
We wish to perform a change of variables in Eqn.~(\ref{fund}) to obtain
a recurrence relation in $\mgf\hN(t,\vth)$.
To this end, note that
\[\pd{\pgf\hN(t,\vz)} {z_k} \od{z_k}{{\vartheta}_k} = 
\exp\left(N^{1-\gamma} \sum_{k=1}^d \vt_k \vr\hII_k(t)\right) 
\left( \vr\hII_k(t) N^{1-\gamma} \mgf\hN(t,\vth) + \pd{\mgf\hN(t,\vth)}{\vt_k} \right),
\]
where
\[ \od {z_k}{{\vartheta}_k} =  N^{-\gamma} \exp(\vt_k N^{-\gamma}) =  N^{-\gamma} z_k. \]
Also,
\[ \pd{\pgf\hN(t,\vz)}  t = \exp\left(N^{1-\gamma} \sum_{k=1}^d \vt_k \vr\hII_k(t)\right)
\left( \sum_k {\vt_k}\afg N^{1-\gamma} \mgf\hN(t,\vth) + \pd{\mgf\hN(t,\vth)}t \right). \]
Now perform the change of variables, and substitute the expressions for the partial 
derivatives of $\pgf\hN(t,\vz)$ into Eqn.~(\ref{fund}). 
Dividing the equation by $\exp(N^{1-\gamma} \sum_{k=1}^d \vt_k \vr\hII_k(t))$ gives the following recurrence
relation for $\mgf\hN(t,\vz)$:
\begin{eqnarray*}\mgf\hN(t,\vth) &=&\mgf\hN(t,\vth)\Pi + 
N^{1-\alpha} \sum_{k=1}^d (z_k-1) \lambda_k \,\mgf\hN(t,\vth)\, E_k \,F\\
&&-\, N^{-\alpha} \sum_{k=1}^d \left(1-\frac{1}{z_k}\right) N^\gamma \mu_k 
\left(N^{1-\gamma}\vr\hII_k(t)\mgf\hN(t,\vth)+\pd{\mgf\hN(t,\vth)}{{\vartheta}_k}\right) F \\
&&- \,N^{1-\alpha-\gamma} \sum_{k=1}^d {\vartheta}_k \afg\mgf\hN(t,\vth) F - N^{-\alpha} \pd{\mgf\hN(t,\vth)}t F. \end{eqnarray*}
The next step is to introduce the second order Taylor expansions for $z_k$ and $z_k^{-1}$:
\[z_k^{\pm 1} = 1 \pm \vt_k N^{-\gamma} + \frac12\vt_k^2 N^{-2\gamma} + O(N^{-3\gamma}).\]
Ignoring all terms that are provably smaller than $N^{-\alpha}$ under the assumption
that $\gamma > 1/3$ (justified later), and combining terms of the same order, we obtain
\begin{eqnarray*}
\mgf\hN(t,\vth)  \hspace{-1mm}&=&\hspace{-1mm}\mgf\hN(t,\vth) \,\Pi + N^{1-\alpha-\gamma} \sum_{k=1}^d {\vartheta}_k \,\mgf\hN(t,\vth)  \left(\lambda_k E_k - \mu_k \varrho\hII_k(t) I - \afg I\right)\hspace{-0.5mm} F \\
&&+\, N^{1-\alpha-2\gamma} \sum_{k=1}^d\frac{ {\vartheta}_k^2}{2} \mgf\hN(t,\vth) \left(\lambda_k E_k + \mu_k\varrho\hII_k(t) I \right){F}
\\&&-\, N^{-\alpha} \sum_{k=1}^d {\vartheta}_k \mu_k \pd{\mgf\hN(t,\vth)}{{\vartheta}_k} F - N^{-\alpha} \pd{\mgf\hN(t,\vth)} t F,
\end{eqnarray*}
up to an error term that is $o(N^{-\alpha}).$
As we did in the proof of the {\sc clt} in Section \ref{sec:Proof} with 
Eqn.~(\ref{ptN-rec-taylored}), we iterate and manipulate this relation, under the 
assumption that $\gamma \geq 1-\alpha/2$ (justified later), until all terms in the right-hand side contain
$\mgf\hN\Pi$. Then we postmultiply with $\m 1 \, N^\alpha$, and develop a differential equation in terms of $\phi\hN(t,\vth):=\mgf\hN(t,\vth)\,{\m 1}.$
After some (by now quite familiar) manipulations, we obtain the following partial differential equation in $\phi\hN(t,\vth)$:
\begin{eqnarray*}
\lefteqn{\hspace{-10mm}\pd{\phi\hN(t,\vth)} t + 
\sum_{k=1}^d \vt_k \mu_k \pd{\phi\hN(t,\vth)}{\vt_k} = \frac12 \phi\hN(t,\vth) \left( 
N^{2-\alpha-2\gamma} \sum_{j=1}^d\sum_{k=1}^d \vt_j\vt_k {\lambda_j\lambda_k} [\bar D]_{jk}\right.}\\
&&
\left.\hspace{5cm} +\,N^{1-2\gamma} \sum_{k=1}^d \vt_k^2 \pi_k (\lambda_k + \mu_k\varrho\hII_k(t))
\right) +o(1),\end{eqnarray*}
where we have used that
\begin{eqnarray*}
 \lefteqn{\hspace{-1.9cm} \bpi^{\rm T} \left( \sum_{j=1}^d \vt_j (\lambda_j E_j - 
 \mu_j \vr\hII_j(t) I - \afg I)\hspace{-0.7mm}  \right)\hspace{-0.7mm} F 
 \hspace{-0.7mm} \left( \sum_{k=1}^d \vt_k (\lambda_k E_k - \mu_k \vr\hII_k(t) I - \afg I) \right)\hspace{-0.7mm}  \m 1}\\
       &=&\sum_{j=1}^d\sum_{k=1}^d {\vartheta}_j {\vartheta}_k \lambda_j \lambda_k\left( \bpi ^{\rm T}E_j D E_k \m 1\right) = \frac12 \sum_{j=1}^d\sum_{k=1}^d {\vartheta}_j {\vartheta}_k \lambda_j \lambda_k [\bar D]_{jk}.
\end{eqnarray*}

The last part of the proof concerns the limiting behavior as $N\to\infty.$
Pick, as before, $\gamma=\max\{1-\alpha/2,1/2\}$, to obtain the  following partial differential equation:
\begin{eqnarray*}
\lefteqn{\pd{\phi(t,\vth)} t + \sum_{k=1}^d {\vartheta}_k \mu_k \pd{\phi(t,\vth)}{{\vartheta}_k} }\\
&=&\frac12 \phi(t,\vth)  \left( \sum_{j=1}^d \sum_{k=1}^d {\vartheta}_j{\vartheta}_k {\lambda_j\lambda_k}[\bar D]_{jk} 1_{\{\alpha\leq1\}}+ \sum_{k=1}^d {\vartheta}_k^2 (\pi_k \lambda_k + \mu_k\varrho\hII_k(t)) 1_{\{\alpha\geq1\}} \hspace{-0.7mm}\right)\hspace{-1mm}. \end{eqnarray*}
It is straightforward to verify that the following expression
constitutes a solution for this differential equation:
\[\phi(t,\vth) = \exp\left(
\frac12 \sum_{j=1}^d\sum_{k=1}^d \vt_j\vt_k [V(t)]_{jk}1_{\{\alpha\leq1\}}
+ \frac12 \sum_{k=1}^d \vt_k^2 \vr\hII_k(t)1_{\{\alpha\geq1\}}
\right).\]

If we redo the derivation for the stationary case (i.e., we now discard the terms
originating from the derivative with respect to $t$ in the original partial differential equation), we end up with
\[ \phi(\vth) = \exp\left( \frac12\sum_{j=1}^d \sum_{k=1}^d{\vartheta}_j{\vartheta}_k [V]_{jk}1_{\{\alpha\leq1\}}+\frac12 \sum_{k=1}^d {\vartheta}_k^2 \varrho\hII_k1_{\{\alpha\geq1\}}  \right).\]
This completes the proof. \end{proof}

\begin{cor} Consider Model\, {\sc ii}.
An immediate consequence of Thm.\ $\ref{THMM2}$ is that, with $\gamma$ as defined before, the random variables
\[{\frac{M\hN - N\vr\hII}{N^\gamma}} \:\:\:\mbox{and}\:\:\:\:
{\frac{M\hN(t) - N\vr\hII(t)}{N^\gamma}}\]
converge to Normal distributions with zero mean and variances
\[\sum_{j=1}^d \sum_{k=1}^d [V]_{jk}1_{\{\alpha\leq1\}}+ \varrho\hII 1_{\{\alpha\geq1\}}\:\:\:\mbox{and}\:\:\:\:\sum_{j=1}^d \sum_{k=1}^d [V(t)]_{jk}1_{\{\alpha\leq1\}}+ \varrho\hII(t) 1_{\{\alpha\geq1\}},\]
respectively, as $N\to\infty$.\end{cor}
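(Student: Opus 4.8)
The plan is to derive this corollary directly from Theorem~\ref{THMM2} by means of the continuous mapping theorem. The key observation is that the scalar centered-and-normalized quantities appearing here are linear images of the vector-valued ones already treated in Theorem~\ref{THMM2}: by the definitions used in Section~\ref{cltII} one has $M\hN=\sum_{k=1}^d M_k\hN$, $\varrho\hII=\sum_{k=1}^d\varrho\hII_k$ and $\varrho\hII(t)=\sum_{k=1}^d\varrho\hII_k(t)$, so that
\[\frac{M\hN - N\varrho\hII}{N^\gamma}=\sum_{k=1}^d\frac{M_k\hN - N\varrho\hII_k}{N^\gamma},\qquad
\frac{M\hN(t) - N\varrho\hII(t)}{N^\gamma}=\sum_{k=1}^d\frac{M_k\hN(t) - N\varrho\hII_k(t)}{N^\gamma}.\]
In particular the normalization $N^\gamma$, with $\gamma=\max\{1-\alpha/2,1/2\}$, and the centering vector $N\boldsymbol\varrho\hII$ pass through the summation map $\m x\mapsto\sum_{k=1}^d x_k$ without any change, since this map is coordinate-free.

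First I would apply Theorem~\ref{THMM2}. Since the summation map is continuous (indeed linear), the right-hand sides above converge in distribution to $\sum_{k=1}^d X_k$, where the random vector $(X_1,\ldots,X_d)$ is distributed as $\mathcal N(\m 0,C)$ in the stationary case, and as $\mathcal N(\m 0,C(t))$ in the transient case. A linear functional of a multivariate Normal random vector is again univariate Normal with zero mean; here $\VAR(\sum_{k=1}^d X_k)=\sum_{j=1}^d\sum_{k=1}^d[C]_{jk}$ in the stationary case, and analogously with $C(t)$ in the transient case.

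It then remains only to evaluate this double sum, using the definition $[C(t)]_{jk}=[V(t)]_{jk}1_{\{\alpha\le1\}}+\varrho\hII_j(t)1_{\{\alpha\ge1\}}1_{\{j=k\}}$ (and its $t\to\infty$ limit for $C$). Summing over $j$ and $k$, the first term contributes $\sum_{j=1}^d\sum_{k=1}^d[V(t)]_{jk}$ (resp.\ $\sum_{j,k}[V]_{jk}$), whereas in the second term the Kronecker factor $1_{\{j=k\}}$ collapses the double sum to $\sum_{k=1}^d\varrho\hII_k(t)=\varrho\hII(t)$ (resp.\ $\varrho\hII$); when $\alpha=1$ both contributions are simultaneously present, consistently with the two indicators overlapping. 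This yields exactly the variances asserted in the corollary. I do not expect any genuine obstacle here: the only bookkeeping point is to note that centering, normalization, and the $\alpha$-dichotomy are all coordinate-independent, so everything commutes with the summation, and the result follows at once from Theorem~\ref{THMM2}.
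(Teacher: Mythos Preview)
Your argument is correct and is exactly what the paper intends: the corollary is stated as an ``immediate consequence'' of Theorem~\ref{THMM2} without further proof, and your use of the continuous mapping theorem with the summation map $\m x\mapsto\sum_k x_k$ together with the variance computation $\sum_{j,k}[C(t)]_{jk}$ is precisely the implicit reasoning. There is no gap and nothing to add.
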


\section{Correlation across time}\label{CAT}
Above we analyzed the joint distribution of the two queues at a given point in time. A related question, to be covered in this section, concerns the joint distribution at distinct time epochs. For ease we assume that the service rates are identical (and equal to $\mu$), so that Model {\sc i} and Model {\sc ii} coincide.

\subsection{Differential equation}
We follow the line of reasoning of \cite[Prop.\ 2]{BKMT}; we consider again the non-scaled model, but, as before, these results can be trivially translated in terms of the $N$-scaled model.
Fix time epochs $0\equiv s_1\le s_2\le\cdots\le s_K$ for some $K\in{\mathbb N}$. The goal of this subsection
is to characterize the joint transform, for $j=1,\ldots,d$,
\[\Psi_j({t},\vz):={\mathbb E}\left(\left.\prod_{k=1}^K z_k^{ M(t+s_k)}\,\right| \,J(0)=j\right).\]Assume a job arrives between $0$ and $\Delta t$, for an infinitesimally small $\Delta t$. Then it is still in the system at time $t+s_{k}$, but not anymore at $t+s_{k+1}$ with probability $f_k({t})-
f_{k+1}(t)$, where $f_k(t):=
e^{-\mu (t+s_k)}.$ As a consequence, we obtain the following relation:
\begin{eqnarray*}\lefteqn{\hspace{-5mm}\Psi_j({t},\vec{z})=\lambda_j\Delta t \, b(t,\vec{z})\, \Psi_j({t}-\Delta t,\vec{z})}\\
&&+\:\sum_{i\not=j}q_{ji}\Delta t\,\Psi_i({t}-\Delta t,\vec{z})+
\left(1-\lambda_j\Delta t-\sum_{i\not =j}q_{ji}\Delta t\right)\Psi_j ({t}-\Delta t,\vec{z})+o(\Delta t),\end{eqnarray*}
where
\begin{eqnarray*}
b(t,\vec{z})&:=& 
(1-f_1(t))+z_1(f_1(t)-f_2(t))+\cdots\\
&&+\,(z_1\cdots z_{K-1})(f_{K-1}(t)-f_{K}(t))+
(z_1\cdots z_{K})f_K(t).
\end{eqnarray*}
With elementary manipulations, we obtain
\[\frac{\Psi_j(t,\vec{z})-\Psi_j(t-\Delta t,\vec{z})}{\Delta t}=\sum_{i=1}^d q_{ji} \Psi_i(t-\Delta t,\vec{z})+a_j(t,\vec{z}) \Psi_j(t-\Delta t,\vec{z})+o(1),
\]
where
$a_j(t,\vec{z}):=\lambda_j  \left(
b(t,\vz) -1\right).$
Now letting $\Delta t\downarrow 0$, and defining  $A(t,\vec{z}):={\rm diag}\{\vec{a}(t,\vec{z})\}$, we obtain the differential equation, in vector notation,
\[ \frac{\partial }{\partial t}\vec{\Psi}(t,\vec{z}) =
(Q+A(t,\vec{z})) \vec{\Psi}(t,\vec{z}) .\]

\subsection{Covariance} We now explicitly compute $\COV(M(s),M(t))$, assuming, without loss of generality, that $s\le t$; the computations are similar to the ones
in Section \ref{expli} (and therefore some steps are left out).
The `law of total covariance', with $J\equiv (J( r))_{r=0}^t$, entails that
\begin{equation}\label{TCov}\COV(M(s),M(t)) = \MEAN\, \COV(M(s),M(t)\,|\,J)+\COV(\MEAN(M(s)\,|\,J),\MEAN(M(t)\,|\,J)).\end{equation}
Due to the fact that $M(s)$ obeys a Poisson distribution
with the random parameter 
$\varphi(J)$, the second term in the right hand side of (\ref{TCov}) can be written as $I_1+I_2$, where
\begin{eqnarray*}
I_1&:=&\sum_{i=1}^d\sum_{j=1}^d\lambda_i\lambda_jK_{ij},\:\:\mbox{where}\:\:K_{ij}:=\int_0^s\int_0^v e^{-\mu(s-u)} e^{-\mu(t-v) }\pi_i\left(p_{ij}(v-u)-\pi_j\right){\rm d}u{\rm d}v,\\
I_2&:=&\sum_{i=1}^d\sum_{j=1}^d\lambda_i\lambda_jL_{ij},\:\:\mbox{where}\:\:L_{ij}:=\int_0^s\int_v^t e^{-\mu(s-u)} e^{-\mu(t-v) }\pi_j\left(p_{ji}(u-v)-\pi_i\right){\rm d}u{\rm d}v.
\end{eqnarray*}
It takes some standard algebra
to obtain \begin{eqnarray*}K_{ij}&=& e^{-\mu t }\pi_i
\int_0^s\left(\int_w^s e^{2\mu v}{\rm d}v\right) e^{-\mu(s+w)} \left(p_{ij}(w)-\pi_j\right){\rm d}w\\
& =&\frac{1}{2\mu} e^{-\mu t} \pi_i \int_0^s \left(e^{\mu(s-w)}-e^{-\mu(s-w)}\right)\left(p_{ij}(w)-\pi_j\right){\rm d}w.\end{eqnarray*}
Similarly,  $L_{ij}=L_{ij}^{(1)}+L_{ij}^{(2)}$, where
\begin{eqnarray*}
L_{ij}^{(1)}&:=&\frac{1}{2\mu}e^{-\mu t}\pi_j 
\left(e^{\mu s}-e^{-\mu s}\right)
\int_0^{t-s} e^{\mu w}
 \left(p_{ji}(w)-\pi_i\right){\rm d}w
,\\
L_{ij}^{(2)}&:=&\frac{1}{2\mu}e^{-\mu s}\pi_j \int_{t-s}^t
\left(e^{\mu( t -w)}- e^{-\mu(t-w)}\right) \left(p_{ji}(w)-\pi_i\right){\rm d}w.
\end{eqnarray*}
Now concentrate on the first term in the right hand side of (\ref{TCov}). To this end, consider the following decomposition:
\[M(s):= M^{(1)}(s,t)+ M^{(2)}(s,t),\:\: \:\:M(t):= M^{(2)}(s,t)+ M^{(3)}(s,t),\]
where $M^{(1)}(s,t)$ are the jobs that arrived in $[0,s)$ that are still present at time $s$ 
but have left at time $t$, $M^{(2)}(s,t)$ the jobs that have arrived in 
$[0,s)$ that are still present at time $t$, and $M^{(3)}(s,t)$ the jobs that have arrived in $[s,t)$ that are still present at time $t$.
 Observe that, conditional on $J$,
these three random quantities are independent. As a result,
\[\MEAN\,\COV(M(s),M(t)\,|\,J) = \MEAN\, \VAR (M^{(2)}(s,t)\,|\,J).\]
Mimicking the arguments used in \cite{DAURIA}, it is immediate that 
$M^{(2)}(s,t)$ has a Poisson distribution with random parameter $\xi(J)$, where
\[\xi(f):=\int_0^s \lambda_{f( r)} e^{-\mu_{f(r )}(t-r)}{\rm d}r.\]
We conclude that
\[
\MEAN\,\COV(M(s),M(t)\,|\,J)=\MEAN\xi(J) =\sum_{i=1}^d \pi_i\lambda_i\int_0^s e^{-\mu(t-r)}{\rm d}r = \varrho(s)\,e^{-\mu(t-s)}.\]
When scaling ${\boldsymbol\lambda} \mapsto N {\boldsymbol\lambda}$ and $Q\mapsto N^\alpha
Q$, for $\alpha>0$,
it is readily verified that for $N$ large,
\[
\COV(M\hN(s),M\hN(t))\sim N \varrho(s)\,e^{-\mu(t-s)}+
N^{2-\alpha}\frac{e^{-\mu t}}{\mu} \left(e^{\mu s} -e ^{-\mu s}\right)U,\]
recalling that $U:=\m\pi^{\rm T}\Lambda D\Lambda\m 1.$
When taking $s=t$, we obtain formulae for the variance that are in line with our findings of Section \ref{expli}.

\subsection{Limit results} We again consider the situation in which
the modulating Markov chain $J(\cdot)$ is sped up by a factor $N^\alpha$ (for some {positive} $\alpha$), while the arrival rates $\lambda_i$ are sped up by $N$. In this subsection we consider the (multivariate) distribution of
the number of jobs in the system at different points in time. While in \cite{BKMT} we just covered the case of $\alpha>1$, we now establish a {\sc clt} for general $\alpha$.

As the techniques used are precisely the same as before, we just state the result. We first introduce some notation.
Define $[\check C(t)]_{k\ell}=[\check C(t)]_{\ell k},$ where for $k\ge \ell$
\[[\check C(t)]_{k\ell}:=\frac{U}{\mu}\left(1-e^{-2\mu(t+s_\ell)}\right)e^{-\mu(s_k-s_\ell)}1_{\{\alpha \le 1\}}
+\frac{\lambda_\infty}{\mu}\left(1-e^{-\mu(t+s_\ell)}\right)e^{-\mu(s_k-s_\ell)}1_{\{\alpha \ge 1\}}.\]

\begin{thm}\label{THMMult}
The random vector
\[ \left( \frac{{M}\hN(t+s_1) - N\varrho(t+s_1)}{{N^\gamma}},\ldots,
\frac{{M}\hN(t+s_K) -N {\varrho}(t+s_K)}{N^\gamma}\right) \]
converges to a $K$-dimensional Normal distribution with zero mean and covariance matrix $\check C(t)$ as $N\to\infty$.
The parameter $\gamma$ equals $\max\{1-\alpha/2,1/2\}$.
\end{thm}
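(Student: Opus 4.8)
The plan is to run, essentially verbatim, the five-step recipe of Section~\ref{sec:Proof}, now starting from the cross-time differential equation $\partial_t\vec\Psi(t,\vec z)=(Q+A(t,\vec z))\vec\Psi(t,\vec z)$ derived above. First I would pass to the scaled model, $\lambda_j\mapsto N\lambda_j$, $Q\mapsto N^\alpha Q$; since $b(t,\vec z)$ does not depend on the background state, $A(t,\vec z)=(b(t,\vec z)-1)\Lambda$, so the scaled equation reads $\partial_t\vec\Psi\hN=N^\alpha Q\,\vec\Psi\hN+N(b(t,\vec z)-1)\Lambda\vec\Psi\hN$. Premultiplying by $N^{-\alpha}F$ and using $FQ=\Pi-I$ turns this into a recurrence of the same shape as Eqn.~(\ref{pN-rec}). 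Next I would perform the centering--normalization change of variables $z_k=\exp(\vt_k N^{-\gamma})$ together with $\vec\Psi\hN(t,\vec z)=\exp\bigl(N^{1-\gamma}\sum_{k}\vt_k\varrho(t+s_k)\bigr)\,\mgf\hN(t,\vth)$, obtaining a recurrence for the centered-and-normalized mgf $\mgf\hN(t,\vth)$; here $\phi\hN(t,\vth):=\bpi^{\rm T}\mgf\hN(t,\vth)$ is, in the limit (the initial distribution of the background process being immaterial), the joint mgf of the centered-and-normalized vector under study. A useful simplification, absent in the proofs of Sections~\ref{sec:Proof}--\ref{sec:hazard-diff-eq}, is that the cross-time equation carries no $\partial_{z_k}$, so no $\partial_{\vt_k}\mgf\hN$ term is produced.

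The one genuinely new ingredient is the Taylor expansion of $b(t,\vec z)$. Writing $b(t,\vec z)-1=\sum_{m=1}^K(z_1\cdots z_m-1)\bigl(f_m(t)-f_{m+1}(t)\bigr)$ with $f_{K+1}:=0$, and setting $\tau_m:=\vt_1+\cdots+\vt_m$, one gets $b(t,\vec z)-1=N^{-\gamma}\beta+\tfrac12N^{-2\gamma}\eta+O(N^{-3\gamma})$, where an Abel-summation step gives $\beta\equiv\beta(t,\vth)=\sum_m\tau_m(f_m-f_{m+1})=\sum_m\vt_m e^{-\mu(t+s_m)}$ and $\eta\equiv\eta(t,\vth)=\sum_m\tau_m^2(f_m-f_{m+1})$. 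Since $\varrho'(t+s_k)=\lambda_\infty e^{-\mu(t+s_k)}$, the centering contributes $-N^{1-\alpha-\gamma}\lambda_\infty\beta\,F\mgf\hN$, which combines with the leading term of the expansion into $N^{1-\alpha-\gamma}\beta\,F(\Lambda-\lambda_\infty I)\mgf\hN$. Iterating the recurrence under the familiar restrictions $\gamma>1/3$ and $\gamma\ge1-\alpha/2$, replacing $\mgf\hN$ by $\mgf\hN\Pi$ wherever permitted, and premultiplying by $\bpi^{\rm T}N^\alpha$, I expect to arrive at the scalar equation
\[ \pd{\phi\hN(t,\vth)}t=\Bigl(N^{2-\alpha-2\gamma}\,\beta^2\,U+\tfrac12N^{1-2\gamma}\lambda_\infty\,\eta\Bigr)\phi\hN(t,\vth)+o(1),\qquad U=\bpi^{\rm T}\Lambda D\Lambda\m1. \]
The two algebraic facts doing the work here are $\bpi^{\rm T}(\Lambda-\lambda_\infty I)\m1=0$ (whence the $O(N^{1-\alpha-\gamma})$-term vanishes exactly after premultiplication by $\bpi^{\rm T}$, as the first term vanished in Section~\ref{subsec: Proof-diff-eq-pt}), and, writing $F=D+\Pi$ and using $\bpi^{\rm T}D=\m0^{\rm T}$, $D\m1=\m0$, the identity $\bpi^{\rm T}(\Lambda-\lambda_\infty I)F(\Lambda-\lambda_\infty I)\m1=U$.

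Choosing $\gamma=\max\{1-\alpha/2,1/2\}$ and letting $N\to\infty$ yields $\partial_t\phi(t,\vth)=\bigl(\beta^2U\,1_{\{\alpha\le1\}}+\tfrac12\lambda_\infty\eta\,1_{\{\alpha\ge1\}}\bigr)\phi(t,\vth)$, with both terms present when $\alpha=1$; being---in contrast to the earlier cases---a linear ordinary differential equation in $t$ for each fixed $\vth$ (no transport term), it has the unique solution $\phi(t,\vth)=\phi(0,\vth)\exp\bigl(\int_0^t(\cdots)\,\rmd u\bigr)$, so the uniqueness issue dealt with in Appendix~A is here immediate. It remains to (i) compute $\phi(0,\vth)$ and (ii) check that this solution equals $\exp(\tfrac12\vth^{\rm T}\check C(t)\vth)$. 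For (ii), differentiating the explicit entries of $\check C(t)$ gives $\tfrac{\rmd}{\rmd t}[\check C(t)]_{k\ell}=2Ue^{-\mu(2t+s_k+s_\ell)}1_{\{\alpha\le1\}}+\lambda_\infty e^{-\mu(t+\max\{s_k,s_\ell\})}1_{\{\alpha\ge1\}}$, and a short computation (the $\alpha\ge1$ part again via Abel summation) confirms $\tfrac12\vth^{\rm T}\check C'(t)\vth=\beta^2U\,1_{\{\alpha\le1\}}+\tfrac12\lambda_\infty\eta\,1_{\{\alpha\ge1\}}$. For (i), since the system is empty at time $0$ we have $M\hN(s_1)=0$, so $\phi\hN(0,\vth)$ is just the centered-and-normalized joint mgf of $(M\hN(s_2),\ldots,M\hN(s_K))$ at the fixed epochs $s_2<\cdots<s_K$; by induction on $K$ (the base case $K=1$ being Thm.~\ref{TH1}) this converges to $\exp(\tfrac12\vth^{\rm T}\check C(0)\vth)$, consistently with the fact that $\check C(0)$ has vanishing first row and column. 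Combining, $\phi\hN(t,\vth)\to\exp(\tfrac12\vth^{\rm T}\check C(t)\vth)$ for every $\vth$, and the Cram\'er--Wold device together with L\'evy's continuity theorem delivers the stated $K$-dimensional {\sc clt}. The main obstacles are not conceptual but bookkeeping: keeping the Abel-summation identities for $\beta$ and $\eta$ aligned with $\check C'(t)$, and handling the nontrivial initial value $\phi\hN(0,\vth)$ (which forces the induction on $K$, or may instead be supplied by the single-pair covariance computed earlier in this section).
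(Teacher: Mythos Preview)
Your proposal is correct and follows exactly the route the paper indicates: it applies the five-step recipe of Section~\ref{sec:Proof} to the cross-time differential equation $\partial_t\vec\Psi=(Q+A(t,\vec z))\vec\Psi$ of Section~\ref{CAT}, and the paper itself offers no further detail beyond the sentence ``as the techniques used are precisely the same as before, we just state the result.'' Your Abel-summation identities for $\beta$ and $\eta$, the observation that the absence of $\partial_{z_k}$-terms collapses the limiting PDE to an ODE in $t$, and especially the induction on $K$ needed to pin down the nontrivial initial value $\phi(0,\vth)$, are genuine details that the paper leaves implicit; they are all sound.
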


As $t\to\infty$, $\check C(t)\to \check C$, where
\[[\check C]_{k\ell}=\frac{u_{k\ell}}{2\mu},\:\:\:\mbox{with}\:\:\:
u_{k\ell}:=
2\left(U 1_{\{\alpha \le 1\}}+\lambda_\infty 1_{\{\alpha \ge 1\}}\right) e^{-\mu(s_k-s_\ell)}.\]
We observe that the limiting centered and scaled process, as $t\to\infty$, has the correlation structure of an Ornstein-Uhlenbeck process
$S(t)$
(at the level of finite-dimensional distributions), that is, the solution to the stochastic differential equation
\[{\rm d}S(t) = - \mu S(t) {{\rm d}t}+\left(2U 1_{\{\alpha \le 1\}}+\sqrt{\lambda_\infty +\mu\varrho(t)}1_{\{\alpha \ge 1\}}\right){\rm d}W(t),\]
with $W(\cdot)$ standard Brownian motion.

\section{Numerical illustration}
\label{sec:illustration}

In this section, we briefly illustrate the accuracy of the approximations that are suggested by the limit theorems of this paper.
In particular, we consider the variance of the queue content $M\hN$
under stationarity for Model {\sc i}. In this case, there is an exact expression for the variance \cite{OCINNEIDEPURDUE}:
\[ \operatorname{\mathbb{V}ar}[M\hN] = 2 N^2 \m\pi^{\rm T} \Lambda (\M - N^{\alpha} Q)^{-1} \Lambda (2 \M - N^{\alpha} Q)^{-1} \m 1 + N \varrho\hI - N^2 (\varrho\hI)^2. \]

On the other hand, Theorem 2 suggests the following asymptotic expression for the variance of $M\hN$:
\[ V_1(N) := N \varrho\hI 1_{\{\alpha\geq1\}} + N^{2-\alpha} \sigma_m^2 1_{\{\alpha\leq1\}}. \]

This expression discards one of the two contributions to the variance, and may therefore be less accurate
when both terms are of comparable size.  To remedy this effect,
we propose the following simple alternative
\[ V_2(N) := N \varrho\hI + N^{2-\alpha} \sigma_m^2,\]
which is asymptotically equivalent with $V_1(N)$ as $N$ grows large.

In Fig.~\ref{fig:loglog}, we illustrate these approximations in the three different regimes
$\vr \gg \sigma_m^2$, $\vr \approx \sigma_m^2$, and $\vr \ll \sigma_m^2$, for
a two-state Markov process with generator $Q$ and varying values of $\alpha$. 
The parameter values for the three cases are
\[ Q = \begin{pmatrix} -1 & 1 \\ 3 & -3 \end{pmatrix},\quad 
\begin{pmatrix} -2 & 2 \\ 1 & -1 \end{pmatrix}, \quad 
\begin{pmatrix} -1 & 1 \\ 3 & -3 \end{pmatrix},
\]
and $\m \lambda = [1,2],\;[1,2],\;[1,50]$, $\m\mu = [2,1],\;[100,1],\;[2,1]$.
We observe that in all cases both approximations tend to the exact values as $N$
gets larger, but the errors are dependent on the specific choices of the
parameters of the Markov process.
As to be expected, $V_2(N)$ is the more accurate one.
The contourplots in the middle row give the relative error in the approximation $V_1(N)$. 
They nicely show the effect of the
absence of one of the terms in the approximation: for $\vr \gg \sigma_m^2$ the relative error is almost
one if $\alpha=1-\varepsilon$, wheras for $\vr \ll \sigma_m^2$ this is the case for $\alpha=1+\varepsilon$.
If the two terms are in balance ($\vr \approx \sigma_m^2$), we see an increase of the relative error around
$\alpha \approx 1$, which is absent  in approximation $V_2(N)$, plotted in the bottom row.

\begin{figure}
\includegraphics[width=0.9\textwidth]{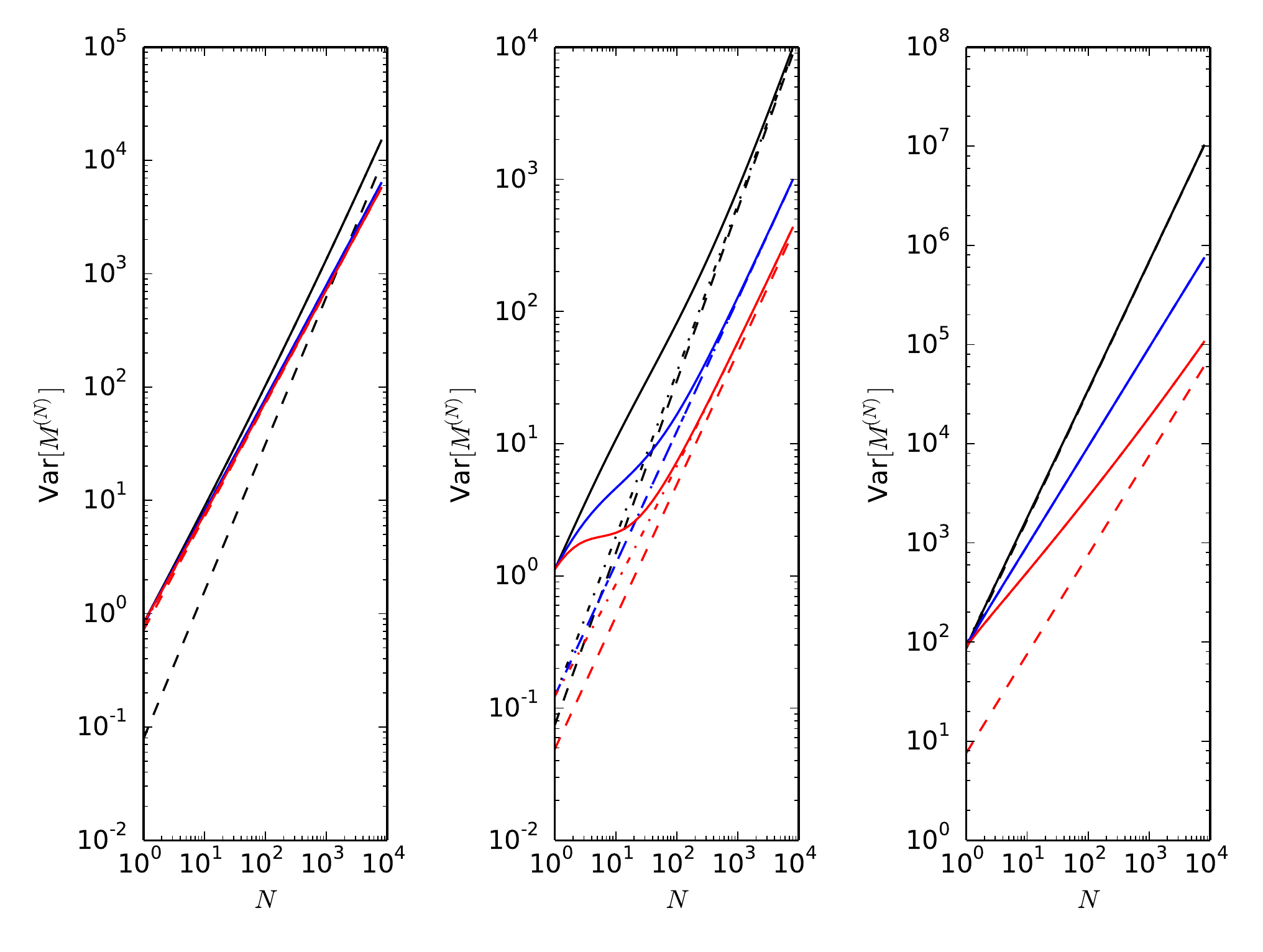}

\vspace{-0.5cm}
\includegraphics[width=\textwidth]{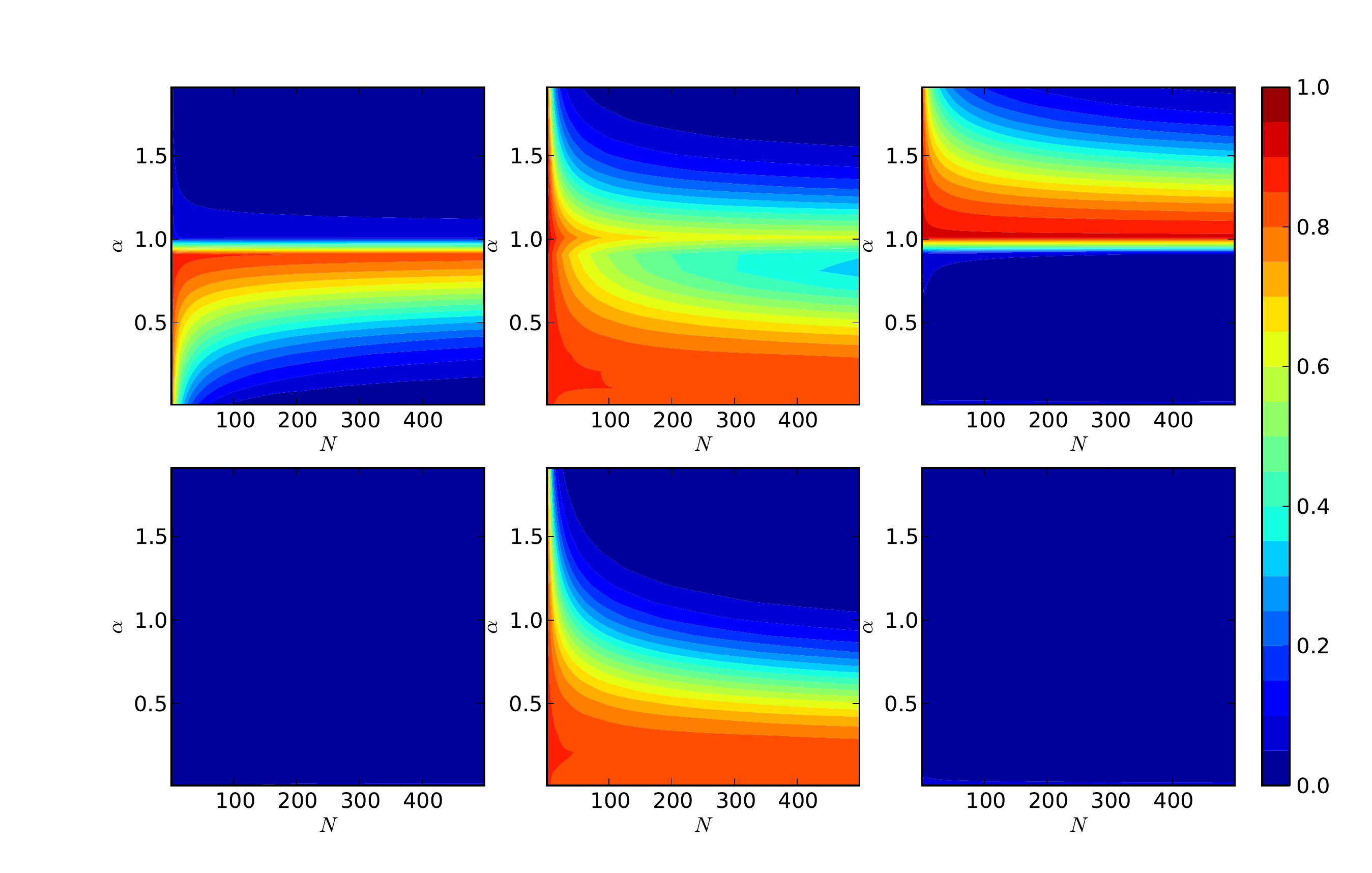}
\caption{Illustration of the behaviour of the approximation in the three different regimes: 
(left) $\vr \gg \sigma_m^2$, (middle) $\vr \approx \sigma_m^2$, (right) $\vr \ll \sigma_m^2$.
Top row: Plots of the variance of $M^{(N)}$ along with two approximations; 
black: $\alpha=0.7$; blue: $\alpha=1.0$; red: $\alpha=1.3$.
Full lines represent the exact values, dashed lines represent the first
approximation $V_1(N)$ and dash-dotted lines represent $V_2(N)$.
Middle row: Contourplots of the relative error in the approximation $V_1(N)$ for varying $\alpha$ and $N$.
Bottom row: Same for approximation $V_2(N)$.}
\label{fig:loglog}
\end{figure}

\section{Discussion and conclusion} 
\label{sec:discussion}
In this paper we derived central limit theorems ({\sc clt}\,s) for infinite-server queues
with Markov-modulated input. In our approach the modulating Markov chain is sped up by a
factor $N^\alpha$ (for some positive $\alpha$), while the arrival process is sped up by 
$N$. Interestingly, there is a {\it phase transition} in the sense that the 
normalization to be 
used in the {\sc clt} depends on the value of $\alpha$: rather than the standard 
normalization by $\sqrt{N}$, it turned out that the centered process should be divided 
by $N^\gamma$, with $\gamma$ equal to $\max\{1-\alpha/2,1/2\}.$ We have proved this by first establishing systems of differential equations for the (transient and stationary) distribution of the number of jobs in the system, and then studying their behavior under the scaling described above.

We have also derived a {\sc clt} for the {\it multivariate} distribution of the number of jobs present at different time instants, complementing the analysis for just $\alpha>1$ in \cite{BKMT}. We anticipate weak convergence to an Ornstein-Uhlenbeck process with appropriate parameters, but establishing such a claim  will require different techniques.


\appendix 
\section{Uniqueness of solutions of the PDEs}
\label{sec:uniqueness}
{
In the various proofs of this article, we have `solved' the differential
equations by guessing a solution and establishing that it satisfies both the
differential equation itself and the boundary conditions. We now show that the
solutions are indeed unique by relying on the method of characteristics
\cite{COURANT}. The method consists of rewriting the partial differential
equation ({\sc pde}) as a system of ordinary differential equations along so-called
characteristic curves, for which the theory of existence and uniqueness is
well-developed.}

{
As all occurring {\sc pde}\,s are of a similar form
and moreover quasi-linear, we can suffice by establishing uniqueness for
the two types of {\sc pde}\,s, the first of which is as follows:
\[ \sum_{k=1}^d \mu_k{\vartheta}_k \,\pd\phi{{\vartheta}_k} = g(\vec{\vartheta})\, \phi({\vartheta}_1,\ldots,{\vartheta}_d), \]
for some function $g(\cdot)$ with boundary condition $\phi(0,\ldots,0) = 1$.
This pertains to differential equations in the proofs of Lemma \ref{LEM4} 
and Thm.~\ref{THMM2}.
Let us consider a parametric curve
\[ \left({\vartheta}_1(t),\cdots,{\vartheta}_d(t),\phi(t)\right),
\]
where $\phi(t) := \phi({\vartheta}_1(t), \cdots, {\vartheta}_d(t))$ (with a slight but customary abuse of notation), subject to the following system of ordinary differential equations ({\sc ode}\,s):
\[ \od{{\vartheta}_k(t)}t = \mu_k {\vartheta}_k(t) \quad\quad\text{ and }\quad\quad \od{\phi(t)}t = g({\vartheta}_1(t),\ldots,{\vartheta}_d(t)) \phi(t). \]}

{
The {\sc ode}\,s in ${\vartheta}_k(t)$ have the following solution:
\[ {\vartheta}_k(t) = {\vartheta}_k(0) \exp(\mu_k t), \]
while the {\sc ode} for $\phi$ is also quasi-linear with a continuous function
$g(\cdot)$, such that a general solution can be found with one undetermined
constant. In order to construct the solution at an arbitrary point
$({\vartheta}_1,\ldots,{\vartheta}_d)$, one puts ${\vartheta}_k(0)={\vartheta}_k$ and then combines
this with the boundary condition $1=\phi(0,\cdots,0)$, which indeed gives us the condition to make the solution of the {\sc ode} in $\phi(t)$ unique.}

{
Next, we consider the {\sc pde}:
\[ \pd\phi t + \sum_{k=1}^d \mu_k{\vartheta}_k \pd\phi{{\vartheta}_k} = g(t,\mathbf{\vartheta})\, \phi(t, {\vartheta}_1,\ldots,{\vartheta}_d), \]
with the boundary condition $\phi(0,{\vartheta}_1,\ldots,{\vartheta}_d)=1$ (i.e., an empty
system at $t=0$) for which the uniqueness question can be tackled in a similar
but slightly different fashion (as $t$ is now an explicit variable of the
problem). This form occurs in the proofs of Thms.\ \ref{TH1} and \ref{THMM2}  (as well
as in the proofs Lemmas \ref{LEM1} and \ref{LEM4} with the slight difference that there is a
negative sign in the $\partial/\partial t$-term, which hardly changes our argument).
Indeed, we consider the  parametric curve:
\[ \left(t,{\vartheta}_1(t),\cdots,{\vartheta}_d(t),\phi(t)\right),
\]
with the same {\sc ode}\,s imposed on ${\vartheta}_k(t)$ (and hence having the same solution as well), while
\[  \od{\phi(t)}t = g(t, {\vartheta}_1(t),\ldots,{\vartheta}_d(t)) \,\phi(t) \]
has again a solution with one undetermined constant.
In order to find the solution at $(t,{\vartheta}_1,\ldots,{\vartheta}_d)$, we put ${\vartheta}_k(t)={\vartheta}_k$,
from which we find ${\vartheta}_k(0) = {\vartheta}_k \exp(-\mu_k t)$. These relations together
with $\phi(0)=1$ ensure that each {\sc ode} has a unique solution, and hence the original {\sc pde} has a
unique solution as well.}


{\small

\end{document}